\tikzset{
    slanted/.style={rotate=-90, anchor = south},
    slantedswap/.style={rotate=-90, anchor=north}
}
  \newcommand{\twodigitspage}{
   \ifnum \value{page} < 10
   0
   \fi
   \thepage
  }
   \theoremstyle{definition}
   \newtheorem{definition}{Definition}[section]
   \newtheorem{remark}[definition]{Remark}
   \theoremstyle{plain}   
   \newtheorem{proposition}[definition]{Proposition}
   \newtheorem{lemma}[definition]{Lemma}
   \newtheorem{theorem}[definition]{Theorem}
   \newtheorem{corollary}[definition]{Corollary}
   \theoremstyle{definition}
\newcommand{\Gal}{\operatorname{Gal}}
\newcommand{\catname}[1]{\textnormal{{\textsf{#1}}}}
\newcommand{\Tor}{\catname{Tor}}
\newcommand{\T}{\catname{T}}
\newcommand{\fp}{\mathfrak{p}}
\newcommand{\colim@}[2]{%
  \vtop{\m@th\ialign{##\cr
    \hfil$#1\operator@font colim$\hfil\cr
    \noalign{\nointerlineskip\kern1.5\ex@}#2\cr
    \noalign{\nointerlineskip\kern-\ex@}\cr}}%
}
\newcommand{\colim}{%
  \mathop{\mathpalette\colim@{\rightarrowfill@\scriptscriptstyle}}\nmlimits@
}
\renewcommand{\varprojlim}{%
  \mathop{\mathpalette\varlim@{\leftarrowfill@\scriptscriptstyle}}\nmlimits@
}
\renewcommand{\varinjlim}{%
  \mathop{\mathpalette\varlim@{\rightarrowfill@\scriptscriptstyle}}\nmlimits@
}
\newcommand{\Rlim@}[2]{
  \vtop{\m@th\ialign{##\cr
    \hfil$#1\catname{R}\operator@font lim$\hfil\cr
    \noalign{\nointerlineskip\kern1.5\ex@}#2\cr
    \noalign{\nointerlineskip\kern-\ex@}\cr}}%
}
\newcommand{\Rlim}{%
  \mathop{\mathpalette\Rlim@{\leftarrowfill@\scriptscriptstyle}}\nmlimits@
}
  \DeclareMathOperator*{\im}{im}
\newcommand{\QQ}{\mathbb{Q}}
\newcommand{\ZZ}{\mathbb{Z}}
\def\th@plain{%
  \thm@notefont{}
  \itshape 
}
\def\th@definition{%
  \thm@notefont{}
  \normalfont 
}
\begin{document}

\title[On the coherency of completed group algebras]{On the coherency of\\
completed group algebras}

 \author{David Burns, Yu Kuang, Dingli Liang}

\begin{abstract} We investigate coherency properties of certain completed integral group rings. \end{abstract}

\address{King's College London,
Department of Mathematics,
London WC2R 2LS,
U.K.}
\email{david.burns@kcl.ac.uk}

\address{School of Mathematical Sciences,
Shanghai Jiao Tong University, 
800 Dongchuan Road,
Shanghai 200240,
China}
\email{yu.kuang@sjtu.edu.cn}

\address{King's College London,
Department of Mathematics,
London WC2R 2LS,
U.K.}
\email{dingli.1.liang@kcl.ac.uk}

\vspace*{-1cm}

\thanks{2020 Mathematics Subject Classification: 16D10, 20E18 (primary);  16E05, 16S34 (secondary).}
\keywords{$n$-coherence, completed group algebras, profinite groups, $p$-adic analytic groups}
\maketitle


\section{Introduction}

Following Chase \cite{chase}, and Bourbaki \cite[Chap. 1]{bourbaki}, a ring is said to be (left, respectively right)  `coherent' if every finitely generated (left, respectively right) ideal is finitely presented. The theory of coherent rings is by now well established (for a comprehensive overview see Glaz's book \cite{glaz}) and has important applications, for example in arithmetic geometry.

It is clear that a  Noetherian ring is coherent and also known that any flat direct limit of coherent rings is coherent (cf. [loc. cit., Th. 2.3.3]). However, it can be very difficult to decide whether a given inverse limit of coherent, or even Noetherian, rings is coherent and there still appear to be no general results in this direction. In this note, we consider this problem in the setting of completed group algebras.  

We recall that, for each commutative ring $A$ and profinite group $G$, the completed group algebra is defined  (following \cite{brumer}) to be the inverse limit  
\[ A[[G]] := \varprojlim_U A[ G/U]\]
in which $U$ runs over open normal subgroups of $G$ and the transition map for $U \subseteq U'$ is  the group ring homomorphism $A[G/U] \to A[G/U']$ induced by the projection $G/U \to G/U'$. Such rings arise naturally in several contexts, for example in arithmetic with $\ZZ[[G]]$ acting on the inverse limits of modules (such as class groups, Selmer groups etc.) over a tower of fields within a given Galois extension of number fields of group $G$. 

Our first result, which will be proved in \S\ref{general proof section}, resolves the question of the coherence of $\ZZ[[G]]$ under a mild technical hypothesis on $G$ (and see also Remark \ref{fcr rem}). 

\begin{theorem}\label{general result} If $G$ has a countable basis of neighbourhoods of the identity and a non-torsion Sylow subgroup, then $\ZZ[[G]]$ is neither left nor right coherent.\end{theorem}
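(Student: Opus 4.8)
The plan is to reduce to the case $G=\ZZ_p$ and then to exhibit an explicit two–generated ideal of $\ZZ[[\ZZ_p]]$ whose module of relations is not finitely generated.

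\emph{Reduction to $\ZZ_p$.} A non‑torsion Sylow pro‑$p$ subgroup of $G$ contains an element of infinite order, hence a closed subgroup $H\cong\ZZ_p$. As is standard for completed group algebras, $\ZZ[[G]]$ is free — in any case faithfully flat — both as a left and as a right $\ZZ[[H]]$‑module. Coherence descends along faithfully flat ring extensions: if $\ZZ[[G]]$ were left coherent, then for each finitely generated left ideal $J$ of $\ZZ[[H]]$ the left ideal of $\ZZ[[G]]$ it generates would be finitely presented, and since finite generation of modules descends along the faithfully flat map $\ZZ[[H]]\to\ZZ[[G]]$, the module of relations of $J$ would be finitely generated, i.e. $J$ finitely presented. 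As $\ZZ[[\ZZ_p]]$ is commutative, it therefore suffices to prove that $R:=\ZZ[[\ZZ_p]]$ is not coherent. (The countability hypothesis on $G$ is what legitimises the derived‑limit computation below; once we have reduced to $\ZZ_p$ it holds automatically, $\ZZ_p$ being metrizable.)

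\emph{The augmentation ideal of $R$.} Fix a topological generator $\sigma$ of $\ZZ_p$, put $T=\sigma-1$, let $\epsilon\colon R\to\ZZ$ be the augmentation and $\mathfrak I=\ker\epsilon$. With $R_n=\ZZ[\ZZ/p^n]=\ZZ[x]/(x^{p^n}-1)$ and $N_n=1+x+\dots+x^{p^n-1}$, one has $\ker\epsilon_n=(x-1)R_n$ and, since $\Ann_{R_n}(x-1)=N_nR_n\cong\ZZ$, an isomorphism $R_n/N_nR_n\xrightarrow{\ \cdot(x-1)\ }(x-1)R_n$; hence $\mathfrak I\cong\varprojlim_n R_n/N_nR_n$. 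The transition map $\ZZ N_{n+1}\to\ZZ N_n$ is multiplication by $p$, so the six‑term $\varprojlim$–$\varprojlim^1$ sequence of $0\to(\ZZ N_n)\to(R_n)\to(R_n/N_nR_n)\to 0$ (using $\varprojlim(\ZZ,\times p)=0$, $\varprojlim^1(\ZZ,\times p)\cong\ZZ_p/\ZZ$, and Mittag–Leffler for $(R_n)$) yields a short exact sequence
\[
0\longrightarrow TR\longrightarrow\mathfrak I\longrightarrow\ZZ_p/\ZZ\longrightarrow 0 .
\]
Because $x$ fixes each $N_n$, the ring $R$ acts on $\ZZ N_n$, and hence on the quotient $\ZZ_p/\ZZ$ above, through $\epsilon$. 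Consequently $\mathfrak I$ is not finitely generated: otherwise $\ZZ_p/\ZZ$ would be a finitely generated $\ZZ$‑module, which it is not (it is uncountable).

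\emph{A finitely generated, non‑finitely‑presented ideal.} First note $R$ is a domain, since it embeds in its $p$‑adic completion $\ZZ_p[[\ZZ_p]]\cong\ZZ_p[[T]]$. The group $\ZZ_p/\ZZ$ is not torsion (otherwise $\ZZ_p=\bigcup_{n\ge 1}(\tfrac1n\ZZ\cap\ZZ_p)\subseteq\QQ$), so by the previous paragraph there is $z\in\mathfrak I$ whose class in $\mathfrak I/TR\cong\ZZ_p/\ZZ$ has infinite order. I would then study the ideal $J=(T,z)R$ via the presentation $R^{2}\to J$, $(a,b)\mapsto aT+bz$. As $R$ is a domain, a relation $(x,y)$ is determined by $y$, and the relation module is identified with $\{y\in R:\,yz\in TR\}$; for $y\in R$ the element $yz$ lies in $\mathfrak I$ with class $\epsilon(y)\cdot[z]$ in $\mathfrak I/TR$, so $yz\in TR$ exactly when $\epsilon(y)=0$, i.e. when $y\in\mathfrak I$. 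Hence the relation module of $(T,z)$ is $\mathfrak I$, which is not finitely generated; so $J$ is finitely generated but not finitely presented, and $R$ is not coherent. Pulling back along the faithfully flat extension $\ZZ[[\ZZ_p]]\hookrightarrow\ZZ[[G]]$ then shows $\ZZ[[G]]$ is neither left nor right coherent.

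\emph{Main obstacle.} The crux is the middle step — computing $\mathfrak I/TR$ as the derived limit $\varprojlim^1(\ZZ,\times p)\cong\ZZ_p/\ZZ$ and checking that $R$ acts on it through the augmentation — since it is this that makes $\mathfrak I$, and then the relation module of $(T,z)$, genuinely infinitely generated. Establishing the faithful flatness of $\ZZ[[G]]$ over $\ZZ[[\ZZ_p]]$ (standard but to be stated carefully) and the identification of the relation module are routine by comparison.
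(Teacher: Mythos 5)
The second half of your argument --- the proof that $\ZZ[[\ZZ_p]]$ is not coherent --- is correct, and is in essence the paper's own construction specialised to the case of a domain: your $\mathfrak I$, the sequence $0\to TR\to\mathfrak I\to\ZZ_p/\ZZ\to 0$, the observation that $R$ acts on $\ZZ_p/\ZZ$ through the augmentation, and the identification of the relation module of $(T,z)$ with $\mathfrak I$ all have exact counterparts in Propositions \ref{ses} and \ref{exp lem} and the uncountability argument at the end of \S\ref{general proof section}. The genuine gap is the reduction step. You assert that, for a closed subgroup $H\cong\ZZ_p$ of $G$ (in general of infinite index), the ring $\ZZ[[G]]$ is free, or at least faithfully flat, as a left and right $\ZZ[[H]]$-module, and you call this ``standard''. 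It is not. The standard freeness/projectivity results for completed group algebras over closed subgroups (Brumer, Ribes--Zalesskii, etc.) concern pseudocompact coefficient rings such as $\ZZ_p$ or $\ZZ/p^n$, where compactness and the existence of continuous sections of $G\to H\backslash G$ make the completed induced-module decomposition work. With coefficients $\ZZ$ the algebra $\ZZ[[G]]=\varprojlim_U\ZZ[G/U]$ is not compact, the inverse limit does not respect the levelwise decompositions $\ZZ[G/U]=\bigoplus_x \ZZ[HU/U]x$ in any useful way, and nonzero derived limits appear (this is precisely the phenomenon the whole paper exploits); no flatness statement for $\ZZ[[G]]$ over $\ZZ[[H]]$ with $H$ of infinite index is known, and it may well fail. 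Tellingly, the paper's own reduction result (Lemma \ref{reduction lemma}) is restricted to \emph{open} subgroups, where $\ZZ[[G]]$ is free of finite rank over $\ZZ[[U]]$; that restriction exists exactly because the closed-subgroup analogue is unavailable.

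Since your descent argument (coherence of $\ZZ[[G]]$ forces coherence of $\ZZ[[H]]$) rests entirely on this unproven faithful flatness, the proof of the theorem as stated does not go through. The paper avoids the issue by never leaving $\ZZ[[G]]$: it takes the element $\pi$ of infinite order, sets $\varpi=\pi-1$, builds the ideal $I(\varpi)=\varprojlim_m R_m\varpi_m$ directly from the countable filtration $\{N_m\}_m$, and then must do additional work (Proposition \ref{exp lem}(ii), the construction of $x_a$ with $Rx_a$ free) precisely because $\ZZ[[G]]$ need not be a domain --- work that your domain-based identification of the relation module cannot replace in the general case. If you wish to salvage your route, you would need either to prove the flatness claim for the specific closed procyclic subgroup (which looks at least as hard as the theorem itself), or to redo your explicit two-generator computation inside $\ZZ[[G]]$, which is exactly what the paper does.
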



The existence of a non-torsion Sylow subgroup is a very mild condition and so the above result applies to most of the groups that arise naturally in arithmetic (cf. Corollary \ref{compact p-adic}). 


With possible arithmetic applications in mind, therefore, it is pertinent to consider the classification of non-coherent rings. Here we focus on the   hierarchy, over natural numbers $n$, of the `$n$-coherence' conditions introduced by 
Costa in \cite{costa}, with `$1$-coherence' being equivalent to the classical notion of coherence. In particular, we recall that $n$-coherent rings, the definition of which is explicitly recalled at the beginning of \S\ref{strong proof section}, have a range of useful properties, including a relatively well-behaved $K$-theory (cf. \cite{ellis}). 

However, despite the weaker nature of these conditions, their verification (for any given $n$) would seem, a priori, to be very difficult (if at all possible) since $\ZZ[[G]]$ is not a compact topological space and so, in any case in which it is not coherent, there are no general techniques for demonstrating finite generation. 

To address these issues, in \S\ref{nakayama section} we introduce a category of `pro-discrete' modules over $\ZZ[[G]]$ and are able to prove a natural analogue of Nakayama's Lemma for this category (see Proposition \ref{nakayama-lemma}). By combining this analogue with well-known results of Brumer \cite{brumer} and Serre \cite{serre}, we shall then, in \S\ref{strong proof section}, deduce the following result.


\begin{theorem}\label{strong coherent thm} If $G$ is a compact $p$-adic analytic group of rank $d$, then $\ZZ[[G]]$ is $(d+3)$-coherent. 
\end{theorem}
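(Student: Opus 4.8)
The plan is to reduce to the case of a uniform pro-$p$ group, and then to transport finiteness of sufficiently high syzygies from the two Noetherian rings $\FF_p[[G]]$ and $\ZZ_p[[G]]$ --- whose global dimensions are supplied by the work of Brumer \cite{brumer} and Serre \cite{serre} --- down to $\ZZ[[G]]$ itself, using the Nakayama Lemma of \S\ref{nakayama section}. For the reduction: by the structure theory of compact $p$-adic analytic groups, $G$ has an open normal subgroup $H$ that is a uniform pro-$p$ group of dimension $d$, and, after choosing a transversal, $\ZZ[[G]]$ is a finitely generated free $\ZZ[[H]]$-module. A short change-of-rings argument shows that $n$-coherence descends from a ring $S$ to any ring $R$ that is finitely generated projective as an $S$-module: restriction of scalars turns an $m$-presented $R$-module into an $m$-presented $S$-module, and exhibits an $R$-syzygy of an $R$-module, up to a finitely generated projective direct summand, as the corresponding $S$-syzygy; hence if $n$-presentedness is automatic over $S$ then it is automatic over $R$. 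Applying this to $\ZZ[[H]]\subseteq\ZZ[[G]]$, we may and do assume henceforth that $G=H$ is uniform pro-$p$ of dimension $d$; put $R=\ZZ[[G]]$, $\widehat R=\ZZ_p[[G]]$ and $\bar R=\FF_p[[G]]$.

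Here $p$ is a non-zero-divisor of $R$, one has $\bar R=R/pR$, the ring $\widehat R$ is the $p$-adic completion of $R$, and the canonical map $R\to\widehat R$ is injective, so $R$ is $p$-adically separated. By the work of Lazard, Brumer and Serre, $\bar R$ and $\widehat R$ are Noetherian regular local rings with residue field $\FF_p$, of global dimensions $\cd_p(G)=d$ and $d+1$ respectively. Moreover a finitely generated $\widehat R$-module is, in a natural way, a pro-discrete $R$-module, so that Proposition \ref{nakayama-lemma} applies to it.

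Now let $M$ be a $(d+3)$-presented $R$-module; fix a partial resolution $R^{a_{d+3}}\to\cdots\to R^{a_0}\to M\to 0$ by finitely generated free modules and set $N=\Omega^{d+3}_R(M)$, a finitely generated $R$-module which, being a submodule of a finitely generated free module, is $p$-torsion-free and $p$-adically separated. It suffices to show that $N$ is finitely presented over $R$: then $M$ is $(d+4)$-presented, and, since $M$ was an arbitrary $(d+3)$-presented module, $R$ is $(d+3)$-coherent. Because every intermediate syzygy $\Omega^i_R(M)$ with $i\ge 1$ is $p$-torsion-free, reducing the partial resolution modulo $p$, and, separately, completing it $p$-adically, yield genuine partial resolutions over $\bar R$ and over $\widehat R$ respectively; hence $N/pN$ is a $(d+2)$-nd syzygy of a module over the ring $\bar R$ of global dimension $d$, and the completion $\widehat N$ is a $(d+2)$-nd syzygy of a module over the ring $\widehat R$ of global dimension $d+1$. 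Since $d+2$ exceeds both global dimensions, $N/pN$ is finitely generated projective over $\bar R$ and $\widehat N$ is finitely generated projective over $\widehat R$; both rings being local, both modules are in fact free, of one and the same rank $b$.

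It remains to descend this to $R$, and here lies the main obstacle. Lift a basis of $N/pN$ to $x_1,\dots,x_b\in N$; by Proposition \ref{nakayama-lemma} applied to the pro-discrete module $\widehat N$, these form a free $\widehat R$-basis of $\widehat N$, so the submodule $Q=\sum_i R x_i$ of $N$ is free of rank $b$ and $p$-adically dense in $N$, while $N/Q$ is a finitely generated $p$-divisible $R$-module. Choosing a presentation $R^{b'}\twoheadrightarrow N$, its kernel $K$ has $K/pK$ and $\widehat K$ finitely generated free, the defining sequences splitting over the local rings $\bar R$ and $\widehat R$. One must now prove that $K$ --- equivalently $N/Q$ --- is finitely generated over $R$. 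The difficulty is that $K$ is only a dense submodule of the compact $\widehat R$-module $\widehat K$, and, $R$ being $p$-adically separated but not complete, finite generation of $K$ cannot be read off from $\widehat R$ alone: a finitely generated $p$-divisible $R$-module need not vanish, as the example $\FF_\ell[[G]]$ with $\ell\neq p$ shows. One must therefore also control $N$ at the primes $\ell\neq p$ --- where $\ZZ_\ell[[G]]$, though no longer Noetherian, has global dimension bounded independently of $d$ because $\cd_\ell(G)=0$ --- and patch the $p$-adic and prime-to-$p$ information by a further application of the pro-discrete Nakayama Lemma. It is this patching, together with the bookkeeping across the three coefficient levels $\FF_p$, $\ZZ_p$ and $\ZZ$, that produces the precise bound $d+3$, and it is the one genuinely delicate part of the argument.
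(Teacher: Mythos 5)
Your reduction to an open (uniform, pro-$p$, torsion-free) subgroup is fine and matches the paper's Lemma \ref{reduction lemma}, and your use of Brumer--Serre to see that the $(d+3)$-rd syzygy $N$ becomes free after reduction mod $p$ and after $p$-adic completion is a reasonable opening (though note that exactness of the $p$-adically completed resolution is not automatic for non-finitely-generated modules and itself needs the Mittag--Leffler argument that appears in the proof of Proposition \ref{upd lemma0}). But the proof stops exactly where the real work begins: you never prove that $N$ is finitely generated over $R=\ZZ[[G]]$. Knowing that $N/pN$ is free over $\FF_p[[G]]$ and that $\widehat N$ is free over $\ZZ_p[[G]]$ does not, by any argument you give, control $N$ itself; you acknowledge this ("one must therefore also control $N$ at the primes $\ell\neq p$ \dots and patch \dots it is the one genuinely delicate part of the argument") and offer only a gesture towards $\ZZ_\ell[[G]]$ and "a further application of the pro-discrete Nakayama Lemma", with no construction. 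Since $(d+3)$-coherence \emph{is} the assertion that this syzygy is finitely generated, the proposal as written does not prove the theorem. (There are also smaller slips: the parenthetical claim that $N/Q$ is "finitely generated" begs the question, and Proposition \ref{nakayama-lemma} is invoked for $\widehat N$ where ordinary Nakayama over the local ring $\ZZ_p[[G]]$ is what is actually used.)

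It is worth seeing why the missing step is not a routine patching. The paper's Nakayama Lemma (Proposition \ref{nakayama-lemma}) is formulated with respect to the group-theoretic filtration $\{N_n\}_n$: to apply it to the syzygy $K$ one must first show that $K$ is pro-discrete, i.e.\ $K\cong\varprojlim_m\varrho_m(K)$ with the transition maps $R_{n'}\otimes_{R_n}\varrho_n(K)\to\varrho_{n'}(K)$ bijective, and then bound $\mu_{R_m}(\varrho_m(K))$ uniformly in $m$. The first point is Proposition \ref{tech lemma}, whose proof hinges on the $p$-divisibility of $\Tor^R_{d+3}(R_a,\im(\theta_1))$ supplied by Proposition \ref{upd lemma0} (this is where Brumer's bound ${\rm pd}_{\ZZ_p[[G]]}(\im(\theta_1)_{\langle p\rangle})\le d+1$ and Serre's $\cd_p(G)=d$ actually enter, and where the shift to $d+3$ comes from); the second point is handled by the Forster--Swan theorem over the orders $\ZZ[\Gamma_m]$, using maximality of $\ZZ_\fp[\Gamma_m]$ at primes $\fp\neq p\ZZ$ and Nakayama at $\fp=p\ZZ$. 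None of this "horizontal" analysis over the finite quotients $\ZZ[\Gamma_m]$ appears in your proposal; your purely "vertical" information at $\FF_p$, $\ZZ_p$ (and hypothetically $\ZZ_\ell$) does not by itself yield finite generation over $\ZZ[[G]]$, as your own example of nonzero finitely generated $p$-divisible modules already indicates. To complete the argument along the paper's lines you would need to establish the pro-discreteness of the syzygy and the uniform generator bounds at finite level, i.e.\ essentially Propositions \ref{upd lemma0}, \ref{nakayama-lemma} and \ref{tech lemma}.
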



Whilst this result is not in all cases best possible (see Remark \ref{last rem}(ii)), establishing any coherency property of this sort for the completed integral group algebras of a general class of profinite groups seems striking (and is, as far as we are aware, without precedent). In addition, such results allow for interesting arithmetic applications. To be more specific, we recall that a stronger version of Theorem \ref{strong coherent thm} was first proved in the special case $G = \ZZ_p$ by Daoud and the first author in \cite{iit}, and we note that some of the techniques used here generalise those of loc. cit. We further recall that the results of \cite{iit} have been used to develop aspects of an arithmetic `integral Iwasawa theory' over $\ZZ[[\ZZ_p]]$, encompassing both concrete new results on the structure of ideal class groups and the formulation (and, in special cases, proof) of a `main conjecture' of integral Iwasawa theory for  $\ZZ_p$-extensions of global fields that strongly refines the classical main conjecture of Iwasawa theory for $\mathbb{G}_m$ (for details see \cite{iit2}). The results of both Theorem \ref{strong coherent thm}, and the more general Propositions \ref{nakayama-lemma} and \ref{upd lemma0}, can similarly contribute towards the development of aspects of an integral Iwasawa theory over more general families of compact $p$-adic analytic extensions of global fields, and we aim to discuss such applications elsewhere.  

\medskip

\noindent{}{\bf Acknowledgements} The first author is very grateful to Alexandre Daoud and Cornelius Greither for insightful discussions concerning this and related projects. 

\section{Coherence results} 

\subsection{The proof of Theorem \ref{general result}}\label{general proof section}

We shall only prove that the stated conditions imply that $\ZZ[[G]]$ is not left coherent (with a completely analogous argument showing that it is not right coherent). 

To do this, we fix a countable basis $\{N_m\}_{m\ge 0}$ of neighbourhoods of the identity of $G$ comprising open normal subgroups $N_m$ with $N_0 = G$ and  $N_{m+1} \subset N_m$ for every $m$. 

We also fix a prime $p$ for which $G$ has a non-torsion Sylow $p$-subgroup $P$ and an element $\pi$ of $P$ of infinite order. We set  
\[ R := \ZZ[[G]]\quad\text{and}\quad \varpi := \pi-1 \in R.\]

For each natural number $m$ we write $\pi_m$ of the image of $\pi$ in the finite group $\Gamma_m := G/N_m$ and $p^{n_m}$ for the order of $\pi_m$ (so that $n_0 = 0$). We assume, as we may (after changing the groups $\{N_m\}_m$ if necessary), that $n_{m+1} > n_m$ for every $m$.  
  We set    
 \[ R_m := \ZZ[\Gamma_m],\,\, T_m := {\sum}_{i=0}^{i=p^{n_m}-1}\pi_m^{i}\in R_m\quad\text{and} \quad\varpi_m := \pi_m-1 \in R_m\]
(so that $R_0 = \ZZ, T_0 = 1$ and $\varpi_0 = 0$). We then define a left $R$-ideal by setting  
\[ I(\varpi) := {\varprojlim}_mR_m\varpi_m \subset {\varprojlim}_mR_m = R,\] 
where the limits are with respect to the natural projection maps $R_m \to R_{m'}$ for $m > m'$. 

Finally, we write $R^p$ and $R_m^p$ for the pro-$p$ completions $\ZZ_p[[G]]$ and $\ZZ_p[\Gamma_m]$ of $R$ and $R_m$ respectively.   

\begin{proposition}\label{ses} The element $\varpi$ is a right non-zero divisor in $R$ and there exists a canonical short exact sequence of (left) $R$-modules 
\[ 0 \to R\varpi \xrightarrow{\subset} I(\varpi) \xrightarrow{\phi_\varpi} R^p/(R^p\varpi + R) \to 0\]
(in which $\phi_\varpi$ is {\em not} induced by the inclusion $I(\varpi) \subset R^p$). 
\end{proposition}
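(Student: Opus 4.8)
The plan is to handle the two assertions in sequence, the non-zero-divisor claim serving as a warm-up. First I would record the elementary identities, valid in each $R_m$ and verbatim in $R_m^p$: $\varpi_mT_m=T_m\varpi_m=0$, $T_m^2=p^{n_m}T_m$, $\{x:x\varpi_m=0\}=R_mT_m$ and $\{x:xT_m=0\}=R_m\varpi_m$; together with the key transition identity that, for $m>m'$, the projection $R_m\to R_{m'}$ sends $T_m$ to $p^{n_m-n_{m'}}T_{m'}$ (group the $p^{n_m}$ summands of $T_m$ into $p^{n_m-n_{m'}}$ blocks, each collapsing to $T_{m'}$). Since taking kernels commutes with inverse limits, $\{x\in R:x\varpi=0\}=\varprojlim_m R_mT_m$; but if $(s_mT_m)_m$ lies in this limit, then for each fixed $m'$ the component $s_{m'}T_{m'}$ is the image of $s_mT_m$ for every $m>m'$ and hence lies in $p^{n_m-n_{m'}}R_{m'}$, so in $\bigcap_N p^NR_{m'}=0$ because $R_{m'}$ is $\ZZ$-free. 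Thus $\varpi$ is a right non-zero-divisor in $R$, and the identical argument works in $R^p$.

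For the short exact sequence, I would pass to the limit in the short exact sequence of inverse systems $0\to(R_mT_m)_m\to(R_m)_m\xrightarrow{\cdot\varpi_m}(R_m\varpi_m)_m\to0$ (exact levelwise by the identities above, transition maps induced by the projections). Using $\varprojlim^1(R_m)=0$ (surjective transitions) and $\varprojlim_m(R_mT_m)=0$ (just shown), the six-term exact sequence collapses to $0\to R\xrightarrow{\cdot\varpi}I(\varpi)\to{\varprojlim}^1_m(R_mT_m)\to0$, so $R\varpi\subseteq I(\varpi)$ and $I(\varpi)/R\varpi\cong{\varprojlim}^1_m(R_mT_m)$; it remains to identify the latter. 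The device for this is the morphism of inverse systems $(R_mT_m)_m\to(R_m/R_m\varpi_m)_m$, $sT_m\mapsto p^{n_m}\cdot(s\bmod R_m\varpi_m)$: it is well-defined and injective because $x\mapsto xT_m$ identifies $R_m/R_m\varpi_m$ with $R_mT_m$ (which is $\ZZ$-free), and a short computation with the transition identity shows it respects the transitions. Its cokernel system is $(R_m/(R_m\varpi_m+p^{n_m}R_m))_m$, and since $\varprojlim^1(R_m/R_m\varpi_m)=0$ and $\varprojlim_m(R_mT_m)=0$, taking limits yields
\[{\varprojlim}^1_m(R_mT_m)\ \cong\ \varprojlim_m\bigl(R_m/(R_m\varpi_m+p^{n_m}R_m)\bigr)\big/\im\bigl(R\to\varprojlim_m(R_m/R_m\varpi_m)\bigr).\]

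The last step, which I expect to be the crux, is to identify this with $R^p/(R^p\varpi+R)$. Since $\Gamma_m$ is finite, $R_m^p/R_m^p\varpi_m=(R_m/R_m\varpi_m)\otimes_\ZZ\ZZ_p$; and since $n_m\to\infty$ and each $R_m/R_m\varpi_m$ is finite free over $\ZZ$, the \emph{diagonal} limit $\varprojlim_m R_m/(R_m\varpi_m+p^{n_m}R_m)$ coincides canonically with the \emph{full} limit $\varprojlim_m R_m^p/R_m^p\varpi_m$ (injectivity is once more the $\bigcap_N p^N=0$ argument; surjectivity a routine $p$-adic lifting argument). In turn $\varprojlim_m R_m^p/R_m^p\varpi_m=R^p/R^p\varpi$, which rests on surjectivity of transitions and on ${\varprojlim}^1_m(R_m^pT_m)=0$; the latter holds because the map $\prod_m R_m^pT_m\to\prod_m R_m^pT_m$ computing $\varprojlim^1$ is surjective, the back-substitution series converging $p$-adically since the transition maps on the $p$-adically complete modules $R_m^pT_m$ carry a factor $p^{n_m-n_{m-1}}$. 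Under these identifications $\im(R)$ becomes $(R+R^p\varpi)/R^p\varpi$, giving $I(\varpi)/R\varpi\cong R^p/(R^p\varpi+R)$; in particular, a by-product of ${\varprojlim}^1_m(R_m^pT_m)=0$ is the inclusion $I(\varpi)\subseteq\varprojlim_m R_m^p\varpi_m=R^p\varpi$. Tracing the isomorphism back shows that $\phi_\varpi$ sends $a\in I(\varpi)$ to $b\bmod(R^p\varpi+R)$, where $b\in R^p$ is the unique element with $b\varpi=a$ — visibly left $R$-linear, and visibly \emph{not} induced by the inclusion $I(\varpi)\subset R^p$ (under which the composite to $R^p/(R^p\varpi+R)$ would vanish on $I(\varpi)\subset R$). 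The main obstacles are thus concentrated here: establishing ${\varprojlim}^1_m(R_m^pT_m)=0$ and the diagonal-versus-full limit comparison with care, and the diagram chase confirming that the abstract connecting homomorphism of the previous paragraph really is the explicit ``division by $\varpi$'' map.
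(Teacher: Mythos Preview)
Your proof is correct and follows essentially the same architecture as the paper's: both obtain $I(\varpi)/R\varpi\cong\varprojlim^1_m R_mT_m$ from the six-term sequence for $0\to R_mT_m\to R_m\to R_m\varpi_m\to 0$, then compute this $\varprojlim^1$ by embedding $(R_mT_m)_m$ into an inverse system with surjective transitions---the paper uses $R_mT_m\hookrightarrow R_me_m\subset\QQ[\Gamma_m]$ (with $e_m=p^{-n_m}T_m$) where you use the canonically isomorphic embedding $R_mT_m\hookrightarrow R_m/R_m\varpi_m$ via multiplication by $p^{n_m}$---and finally identify the resulting quotient with $R^p/(R^p\varpi+R)$ by comparison with the $R^p$-analogue (your diagonal-limit/cofinality argument and explicit series for $\varprojlim^1_m R_m^pT_m=0$ replacing the paper's second six-term sequence and one-line appeal to compactness). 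Your explicit ``divide by $\varpi$ in $R^p$'' description of $\phi_\varpi$ is a useful addition not spelled out in the paper's proof of this proposition.
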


\begin{proof} We write $\Lambda$ for either $R$ or $R^p$, with $\Lambda_m$ denoting the corresponding ring $R_m$ or $R_m^p$. Then, in both cases, there exists an exact commutative diagram 

\begin{equation}\begin{CD}\label{limit diagram}
0 @> >> {\prod}_{m} \Lambda_{m} T_{m} @> \subseteq >> {\prod}_m \Lambda_{m} @> 1\mapsto (\varpi_m)_m >> {\prod}_m \Lambda_{m} \varpi_{m} @> >> 0\\
& & @V(1-\rho_m)_m VV @V(1-\rho_m)_m VV @V(1-\rho_m)_m VV\\
0 @> >> {\prod}_{m} \Lambda_{m} T_{m} @> \subseteq >> {\prod}_m \Lambda_{m} @> 1\mapsto (\varpi_m)_m >> {\prod}_m \Lambda_{m} \varpi_{m} @> >> 0\end{CD}\end{equation}
%
%
\noindent{}in which $\rho_{m}$ denotes the natural projection map $\Lambda_{m} \to \Lambda_{m-1}$ (and its restrictions to both $\Lambda_mT_m$ and $\Lambda_m\varpi_m$). In particular, since $\rho_{m}(T_{m}) = p^{n_{m}-n_{m-1}}\cdot T_{m-1}$ with $n_{m}> n_{m-1}$ and $\rho_{m}(\Lambda_{m}) = \Lambda_{m-1}$, the Snake Lemma applies to this diagram to give an exact sequence 
\begin{equation*} 0 = {\varprojlim}_m \Lambda_mT_m \to \Lambda \xrightarrow{ \lambda\mapsto\lambda\varpi} {\varprojlim}_m \Lambda_{m}\varpi_{m} \to {\varprojlim}_m^1 \Lambda_{m}T_{m} \to {\varprojlim}_m^1 \Lambda_{m} = 0.\end{equation*}
This sequence implies $\varpi$ is a right non-zero divisor in $\Lambda$ and also gives a short exact sequence
\begin{equation}\label{key start} 0 \to \Lambda\varpi \xrightarrow{\subseteq} {\varprojlim}_m \Lambda_{m}\varpi_{m} \to {\varprojlim}_m^1 \Lambda_{m}T_{m} \to 0.\end{equation}

If $\Lambda = R^p$, then the derived limit ${\varprojlim}_m^1 \Lambda_{m}T_{m}$ vanishes since each module $\Lambda_{m}T_{m}$ is finitely generated over $\ZZ_p$ and hence compact. 

To compute ${\varprojlim}_m^1 R_{m}T_{m}$ we write $e_m$ for the idempotent $p^{-n_m}T_m$ of $\QQ[\Gamma_m]$ and $Q_m$ for the quotient of $R_m e_m$ by $R_mT_m$ and use the commutative diagram 

\begin{equation}\begin{CD}\label{limit diagram0}
0 @> >> {\prod}_{m} R_mT_m @> \subseteq >> {\prod}_m R_me_m @> >> {\prod}_m Q_m @> >> 0\\
& & @V(1-\rho_m)_m VV @V(1-\rho_m)_m VV @V(1-\rho'_m)_m VV\\
0 @> >> {\prod}_{m} R_mT_m @> \subseteq >> {\prod}_m R_me_m @> >> {\prod}_m Q_m @> >> 0
\end{CD}\end{equation}
%
%
%
in which each row is the tautological short exact sequence and $\rho_m': Q_m \to Q_{m-1}$ is induced by $\rho_m$. Then, since $\rho_{m+1}(e_{m+1}) = e_m$, by applying the Snake Lemma to this diagram one obtains a short exact sequence 
\begin{equation}\label{key start2} 0 \to {\varprojlim}_m R_{m}e_m \to {\varprojlim}_m Q_{m} \to {\varprojlim}_m^1 R_{m}T_{m} \to 0.\end{equation}
In view of the natural isomorphisms of finite abelian $p$-groups   
\[ Q_m = R_m e_m/(R_mT_m) = R_me_m/(p^{n_m}R_me_m) \cong R^p_me_m/(p^{n_m}R^p_me_m) = R^p_me_m/R^p_mT_m,\]
there are also analogues of the diagrams (\ref{limit diagram0}) in which each term $R_m$ is replaced by $R_m^p$. By passing to the limit over these diagrams and noting ${\varprojlim}_m^1 R^p_{m}T_{m}$ vanishes since each module $R^p_mT_m$ is compact, one obtains an identification 
\begin{equation}\label{ident} {\varprojlim}_m R^p_{m}e_m = {\varprojlim}_m Q_{m}\end{equation}
and hence a short exact sequence 

\begin{equation}\label{key step} 0 \to {\varprojlim}_m R_{m}e_m \xrightarrow{\subset} {\varprojlim}_m R_m^pe_m \to {\varprojlim}_m^1 R_{m}T_{m} \to 0.\end{equation}
In addition, for each $m$, there exists an exact commutative diagram 
\begin{equation*}\begin{CD}
0 @> >> \Lambda_{m+1}\varpi_{m+1} @> \subset >> \Lambda_{m+1} @> 1 \mapsto e_{m+1} >> \Lambda_{m+1}e_{m+1} @> >> 0,\\
& & @V VV @V VV @V VV\\
0 @> >> \Lambda_{m}\varpi_m @> \subset >> \Lambda_{m} @> >> \Lambda_{m}e_{m} @> >> 0\end{CD}\end{equation*}
in which each vertical arrow is induced by $\rho_{m+1}$ and so is surjective. In particular, since $R^p\varpi = {\varprojlim}_m R^p_{m}\varpi_m$ (as a consequence of (\ref{key start}) with $\Lambda = R^p$), by passing to the limit over these diagrams we obtain short exact sequences
\begin{equation}\label{ident2}  0 \to I(\varpi) \to R \to {\varprojlim}_m R_{m}e_{m} \to 0\end{equation}
\begin{equation}\label{ident3} 0 \to R^p\varpi \to R^p \to {\varprojlim}_m R^p_{m}e_{m} \to 0.\end{equation}
These sequences combine with the sequence (\ref{key step}) to induce an identification of the derived limit ${\varprojlim}_m^1 R_{m}T_{m}$ with the quotient $R$-module $R^p/(R^p\varpi + R)$ and then the claimed exact sequence follows directly from (\ref{key start}) with $\Lambda = R$. \end{proof}

In the sequel we fix an element $a\in (\ZZ_p \setminus \QQ) \subset R^p$ 
and write $Q(a)$ for the $R$-submodule of $R^p/(R^p\varpi + R)$ generated by the class of $a$. In the next result we also use the surjective map $\phi_\varpi$ from Proposition \ref{ses}.

\begin{proposition}\label{exp lem} The following claims are valid. 
\begin{itemize}
    \item[(i)] The $R$-module $Q(a)$ is isomorphic to $R/I(\varpi)$.
    \item[(ii)] There exists $x_a\in I(\varpi)$ with $\phi_\varpi(x_a) = a$ and such that the $R$-module $Rx_a$ is free.  
    \end{itemize}
    \end{proposition}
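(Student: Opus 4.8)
The plan is to establish (i) by a direct computation of the annihilator of the generator of $Q(a)$, and (ii) by exhibiting an explicit element $x_a$ that, crucially, lies inside the procyclic subalgebra generated by $\pi$. For (i): $Q(a)$ is the cyclic $R$-module on the class $\bar a$ of $a$ in $M:=R^p/(R^p\varpi+R)$, so $Q(a)\cong R/\Ann_R(\bar a)$ and it suffices to prove $\Ann_R(\bar a)=I(\varpi)$. One inclusion is easy: passing to the limit over the inclusions $R_m\varpi_m\subseteq R^p_m\varpi_m$ gives $I(\varpi)\subseteq \varprojlim_m R^p_m\varpi_m=R^p\varpi$ (the identification being (\ref{key start}) for $\Lambda=R^p$, whose derived limit vanishes), so each $r\in I(\varpi)$ has the form $s\varpi$ with $s\in R^p$; as $a$ is central in $R^p$ we get $ra=sa\varpi\in R^p\varpi$, whence $r\in\Ann_R(\bar a)$. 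For the reverse inclusion one uses the compatible identifications $R/I(\varpi)\cong\varprojlim_m R_m e_m$ and $R^p/R^p\varpi\cong\varprojlim_m R^p_m e_m$ supplied by (\ref{ident2}) and (\ref{ident3}): if $r\in R$ satisfies $ra\in R^p\varpi+R$, then the image $(a\cdot r_m e_m)_m$ of $ra$ in $\varprojlim_m R^p_m e_m$ lies in the submodule $\varprojlim_m R_m e_m$, i.e.\ $a\cdot(r_m e_m)\in R_m e_m$ for every $m$; since $R_m e_m$ is a finitely generated torsion-free, hence free, $\ZZ$-module and $a\notin\QQ$, this forces $r_m e_m=0$ for all $m$, so $r\in I(\varpi)$. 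The only real point to check is the compatibility of the two identifications, which comes from the commutativity of the diagrams used to prove Proposition \ref{ses}.

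For (ii) I would build $x_a$ by telescoping the $p$-adic expansion of $a$. Pick integers $a_k$ with $a_0=0$ and $a_k\equiv a\pmod{p^{n_k}}$ (so $a_k\to a$ in $\ZZ_p$), and set $\beta_k:=(a_k-a_{k+1})/p^{n_k}\in\ZZ$, so that $\sum_{k\ge 0}\beta_k p^{n_k}=a_0-a=-a$. Let $x_a:=-\sum_{k\ge 0}\beta_k(\pi^{p^{n_k}}-1)\in R$; this converges because $\pi^{p^{n_k}}-1$ lies in the kernel of $R\to R_k$. The identity $\pi_m^{p^{n_k}}-1=(1+\pi_m+\cdots+\pi_m^{p^{n_k}-1})\varpi_m$ shows that the image of $x_a$ in each $R_m$ lies in $R_m\varpi_m$, so $x_a\in\varprojlim_m R_m\varpi_m=I(\varpi)$. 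To check $\phi_\varpi(x_a)=a$, trace the construction of $\phi_\varpi$ in the proof of Proposition \ref{ses}: at level $m$ the image of $x_a$ is $z_m\varpi_m$ with $z_m:=-\sum_{k<m}\beta_k(1+\pi_m+\cdots+\pi_m^{p^{n_k}-1})$, and $z_m-\rho_{m+1}(z_{m+1})=\beta_m T_m$, so $x_a$ maps to the class of $(\beta_m T_m)_m$ in ${\varprojlim}_m^1 R_m T_m$; on the other hand a short unwinding of the identifications in that proof shows that the class of $a$ in $M$ corresponds to exactly this same class, so the two agree.

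It remains to see that $Rx_a$ is free. By construction $x_a$ lies in the subalgebra $\ZZ[[\Pi]]\subseteq R$ with $\Pi:=\overline{\langle\pi\rangle}\cong\ZZ_p$; under $\ZZ_p[[\Pi]]\cong\ZZ_p[[Y]]$, $\pi\mapsto 1+Y$, the element $x_a=-\sum_k\beta_k((1+Y)^{p^{n_k}}-1)$ has $Y$-coefficient $-\sum_k\beta_k p^{n_k}=a\ne 0$, so $x_a$ is a nonzero element of the domain $\ZZ_p[[\Pi]]$. By Brumer's results \cite{brumer}, $R^p=\ZZ_p[[G]]$ is free as a right $\ZZ_p[[\Pi]]$-module, so right multiplication by $x_a$ is injective on $R^p$ (being so on $\ZZ_p[[\Pi]]$); therefore $\Ann_R(x_a)=R\cap\Ann_{R^p}(x_a)=0$ and $Rx_a\cong R$ is free. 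I expect the main obstacle to be pinning down $x_a$ in the first place — in particular recognising that it can be, and for the freeness argument should be, chosen inside the procyclic subalgebra $\ZZ[[\Pi]]$ — together with the bookkeeping in the previous step needed to match the two $\varprojlim^1$-representatives and so confirm $\phi_\varpi(x_a)=a$.
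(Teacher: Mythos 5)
Your proposal is correct, and for claim (i) it is essentially the paper's argument: you test the condition $ra\in R^p\varpi+R$ against the idempotent component $e_m$ (equivalently, multiply by $T_m$ on the right, as the paper does) and use the irrationality of $a$, with the identifications (\ref{ident2}) and (\ref{ident3}) doing the bookkeeping; you also spell out the easy inclusion $I(\varpi)\subseteq\Ann_R(\bar a)$ that the paper leaves implicit. For claim (ii), your element is the paper's element in light disguise: choosing your integers to be the partial sums $\sum_{j<k}a_jp^{n_j}$ of the paper's digit expansion gives $\beta_k=-a_k$ (paper's digits), so your series $-\sum_k\beta_k(\pi^{p^{n_k}}-1)$ has level-$m$ component $\sum_{k<m}a_kT_{m,k}\varpi_m=\varpi_my_{a,m}$, and your verification of $\phi_\varpi(x_a)=a$ is the same connecting-homomorphism computation; the ``short unwinding'' you defer (that the class of $a$ is represented by $(\beta_mT_m)_m$) is exactly the paper's computation through (\ref{limit diagram0}) and (\ref{ident}), and it does come out as you claim. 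The genuinely different step is the freeness of $Rx_a$: the paper factors $x_a=\varpi\cdot(y_{a,m})_m$ and shows each $y_{a,m}$ is a non-zero divisor in $R_m$ by evaluating at the characters of $\langle\pi_m\rangle$, whereas you observe that $x_a$ lies in $\ZZ[[\Pi]]$ with $\Pi:=\overline{\langle\pi\rangle}\cong\ZZ_p$, is non-zero in $\ZZ_p[[\Pi]]\cong\ZZ_p[[Y]]$ because its linear coefficient is $a\neq0$, and then invoke the standard (Brumer-style) fact that $\ZZ_p[[G]]$ is a free pseudocompact right $\ZZ_p[[\Pi]]$-module, so that right multiplication by $x_a$ is injective on $R^p\supseteq R$. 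Both routes are sound: the paper's is elementary and entirely at finite level, while yours is shorter and more conceptual at the cost of importing freeness over the Iwasawa algebra of a closed procyclic subgroup (note that this is freeness in the pseudocompact sense, i.e.\ a product/inverse limit of finite free modules, but injectivity of multiplication by a fixed non-zero element of the domain $\ZZ_p[[Y]]$ passes to such limits, so your deduction stands); your $Y$-coefficient computation is the exact analogue of the paper's trivial-character evaluation $\chi_\ast(y_{a,m})=\sum_{j<m}a_jp^{n_j}$.
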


\begin{proof} To prove claim (i) it is enough to show that if $r = (r_m)_m$ is any element of $R = \varprojlim_mR_m$ such that, in $R^p = \varprojlim_mR^p_m$, one has $ra \in R^p\varpi + R$, then for every $m$ one has $r_m \in R_m\varpi_m$. However, if $ra \in R^p\varpi + R$, then for every $m$ there exist elements $b_m$ of $R^p_m$ and $c_m$ of $R_m$ such that $ar_m = (ra)_m = b_m\varpi_m + c_m$ and, upon multiplying this equality on the right by $T_m$ we deduce that 
\[ a r_mT_m = b_m\varpi_m T_m + c_mT_m = c_mT_m.\]
Since $a \notin \QQ$, this equality implies $r_mT_m = 0$ and hence that $r_m \in R_m\varpi_m$, as required.

Next we note that, since $Q(a)$ is non-zero (by claim (i)), any pre-image $x_a$ of the class of $a$ under $\phi_\varpi$ is also non-zero. In particular, if $R$ is a domain (as is the case, by Neumann \cite{neumann}, if $G$ is a torsion-free pro-$p$ group), then the $R$-module $Rx_a$ is automatically free. In the general case, however, the proof of claim (ii) requires more effort. To proceed, for each non-negative integer $i$ we write $a_i$ for the unique integer with $0\le a_i < p^{n_{i+1}-n_i}$ such that  
\[ a = {\sum}_{i \ge 0}a_ip^{n_i} \in \ZZ_p.\]
For integers $j$ with $0 \le j\le m$, we then define elements of $R_m$ by setting 
\[ T_{m,j} := {\sum}_{i=0}^{i=p^{n_j}-1}\pi_m^i \quad \text{and}\quad  y_{a,m} := {\sum}_{j=0}^{j=m-1}a_jT_{m,j}\]
(so $T_{m,0} = 1$ and $T_{m,m} = T_m$). It is then easily checked that the element 

\[ x_a := (\varpi_m y_{a,m})_m \in {\prod}_{m}R_m\]
belongs to $I(\varpi) = \varprojlim_m R_m\varpi_m$ and we aim to verify that this element has the properties stated in claim (ii). 

As a first step, an explicit computation of the connecting homomorphism arising from the diagram (\ref{limit diagram}) shows that the image in $ {\varprojlim}_m^1 \Lambda_{m}T_{m}$ of $x_a$ under the map in (\ref{key start}) is represented by the element 
\begin{equation}\label{step0} (y_{a,m}-\rho_{m+1}(y_{a,m+1}))_m = (-a_mT_m)_m\in {\prod}_m R_mT_m. \end{equation}

In a similar way, an explicit computation of the connecting homomorphism of (\ref{limit diagram0}) shows that this element of ${\varprojlim}_m^1 \Lambda_{m}T_{m}$ is the image under the map in (\ref{key start2}) of the element of $\varprojlim_mQ_m$ that is represented by  
\[ ( \bigl({\sum}_{j=0}^{m-1}a_jp^{n_{j}})e_m\bigr)_m \in {\prod}_m R_me_m.\]
Then, since for each $m$ one has $a \equiv {\sum}_{j=0}^{m-1}a_jp^{n_{j}}$ modulo $p^{n_m}\ZZ_p$, the latter element corresponds under the identification (\ref{ident}) to the element $(ae_m)_m$ of ${\varprojlim}_m R^p_{m}e_m$. Hence, under the isomorphism of ${\varprojlim}_m^1 R_{m}T_{m}$ with 
$R^p/(R^p\varpi + R)$ that is induced by the sequences (\ref{ident}), (\ref{ident2}) and (\ref{ident3}), the element of ${\varprojlim}_m^1 R_{m}T_{m}$ represented by (\ref{step0}) corresponds to the class of $a$. 

This explicit computation has shown that $\phi_\varpi(x_a) = a$ and so, to complete the proof of claim (ii), it is enough for us to prove that the $R$-module $Rx_a$ is free. Hence, since $x_a = \varpi \cdot (y_{a,m})_m$ in ${\prod}_mR_m$ and $\varpi$ is a non-zero divisor of $R$, it is enough for us to show that, for every $m$, the element $y_{a,m}$ is a non-zero divisor of $R_m$. To do this, we fix $m$ and write $\Delta_m$ for the subgroup of $\Gamma_m$ that is generated by $\pi_m$. Then, since $y_{a,m}$ belongs to $\ZZ[\Delta_m]$, it is enough to show that the annihilator in $\QQ^c[\Delta_m]$ of $y_{a,m}$ vanishes. In particular, if for each homomorphism $\chi: \Delta_m \to \QQ^{c\times}$ we write $\chi_\ast$ for the induced ring homomorphism $\QQ^c[\Delta_m] \to \QQ^c$, then it suffices to prove that $\chi_\ast(y_{a,m})\not= 0$ for every $\chi$.

If $\chi$ is trivial, then the sum 
\[ \chi_\ast(y_{a,m}) = {\sum}_{j=0}^{j=m-1}a_j\chi_\ast(T_{m,j}) = {\sum}_{j=0}^{j=m-1}a_jp^{n_j}\]
is non-zero since $0 \le a_j < p^{n_{j+1}-n_j}$ for every $j$. If $\chi$ is non-trivial, and of order $p^d$ say (so $d \le n_m$), then $\chi_\ast(\varpi_m) = \chi(\pi_m)-1$ is non-zero and 
\[ \chi_\ast(\varpi_m)\chi_\ast(y_{a,m}) = \chi_\ast ((\pi_m-1)y_{a,m}) = \chi_\ast\bigl({\sum}_{j=0}^{j=m-1}a_j (\pi_m^{p^{n_j}}-1)\bigr) =
{\sum}_{j\in J_\chi}a_j(\chi(\pi_m)^{p^{n_j}}-1),\]
where $J_\chi$ is the set of integers $j$ with $n_j< d$. It is therefore enough to note that this last sum is non-zero since the elements $\{\chi(\pi_m)^{n_j}-1\}_{j \in J_\chi}$ are linearly independent over $\QQ$ (as $n_j > n_{j'}$ for $j > j'$). 
\end{proof}

To prove Theorem \ref{general result} we now fix an element $x_a$ as in Proposition \ref{exp lem}(ii). Then, since each $R$-module $R\varpi$ and $Rx_a$ is free (the former by the first assertion of Proposition \ref{ses}), the general result of 
%
%
\cite[Cor. 2.1.3]{glaz} implies that the (finitely generated) ideal 
$ R\varpi + Rx_a$ of $R$ is finitely presented if and only if $R\varpi\cap Rx_a$ is finitely generated. It is therefore enough for us to show that $R\varpi\cap Rx_a$ is not finitely generated. To do this, we use the composite isomorphism of $R$-modules   
\[ Rx_a/(R\varpi\cap Rx_a) \cong (R\varpi + Rx_a)/R\varpi \cong Q_a \cong R/I(\varpi)\]
in which the second isomorphism is induced by $\phi_\varpi$ (and the exact sequence in Proposition \ref{ses}) and the third by Proposition \ref{exp lem}(i). In particular, since the $R$-module $Rx_a$ is free of rank one, the displayed isomorphism combines with Schanuel's Lemma \cite[(2.24)]{curtis-reiner} to imply that  $R\varpi\cap Rx_a$ is finitely generated if and only if $I(\varpi)$ is finitely generated.  
 In view of the surjectivity of $\phi_\varpi$, it is therefore enough for us to show that the quotient module $R^p/(R^p\varpi + R)$ is not finitely generated over $R$. 
 
 To establish this, we argue by contradiction and so assume that, for some natural number $t$, the set $\{y_i\}_{1\le i\le t}$ is a set of elements of $R^p$ whose images generate $R^p/(R^p\varpi + R)$ as an $R$-module. Then, writing $\varepsilon: R^p \to \ZZ_p$ for the natural projection map, and noting that $\varepsilon(\varpi) = 0$, it follows that $\{\varepsilon(y_i)\}_{1 \le i\le t}$ is a finite set of generators of the abelian quotient group $\varepsilon(R^p)/\varepsilon(R) = \ZZ_p/\ZZ$ and this is not possible since $\ZZ_p/\ZZ$ is uncountable.  
 
This completes the proof of Theorem \ref{general result}. 


\begin{remark}\label{fcr rem} As a natural weakening of the notion of coherence, a domain is said to be a (left, respectively right) `finite conductor domain' if the intersection of any two of its principal (left, respectively right) ideals is finitely generated (see Glaz \cite{glaz-fcd}, but note that the concept was first considered by Dobbs in  \cite{dobbs}). In particular, by showing that $R\varpi \cap Rx_a$ is not finitely generated over $R$, the above argument implies that, under the conditions of Theorem \ref{general result}, $\ZZ[[G]]$ is not a (left, respectively right) finite conductor domain. \end{remark}

\subsection{Examples} The assumed existence of a non-torsion Sylow subgroup rules out profinite groups such as $(\ZZ/p\ZZ)^\mathbb{N}$ for any prime $p$ and ${\prod}_\ell (\ZZ/\ell\ZZ)$  where $\ell$ runs over any infinite set of primes. However it is satisfied by most of the groups that arise naturally in arithmetic. In particular, Theorem \ref{general result} has concrete consequences such as the following.  

\begin{corollary}\label{compact p-adic} Fix a prime $p$. Then the ring $\ZZ[[G]]$ is neither left nor right coherent in each of the following cases: 
\begin{itemize}
\item[(i)] $G$ is a compact $p$-adic analytic group of positive rank.
\item[(ii)] $G$ is the Galois group of an extension of number fields, or of $p$-adic fields, that contains a $\ZZ_\ell$-subextension for any prime $\ell$.
\item[(iii)] $G$ is a Sylow $p$-subgroup of the absolute Galois group of a number field.
\end{itemize}
\end{corollary}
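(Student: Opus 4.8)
The plan is to deduce all three assertions from Theorem~\ref{general result}, by verifying in each case that $G$ has a countable basis of neighbourhoods of the identity and a non-torsion Sylow subgroup. The first property is the easier one and can be handled uniformly. A number field, and a $p$-adic field, has only countably many finite extensions inside a fixed algebraic closure, so its absolute Galois group has a countable basis of open normal subgroups; hence so does any continuous quotient of it, which covers cases (ii) and (iii). A compact $p$-adic analytic group contains a finitely generated open uniform pro-$p$ subgroup, hence is topologically finitely generated and so metrizable, which covers case (i).

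For the non-torsion Sylow subgroup, I would in each case produce a prime $\ell$ together with either a closed subgroup of $G$ isomorphic to $\ZZ_\ell$ or a continuous surjection $G \twoheadrightarrow \ZZ_\ell$, and then appeal to standard Sylow theory for profinite groups: Sylow pro-$\ell$ subgroups exist, are mutually conjugate, contain every closed pro-$\ell$ subgroup, and have index prime to $\ell$ (as a supernatural number). Thus, if $G$ has a closed subgroup isomorphic to $\ZZ_\ell$, then a Sylow pro-$\ell$ subgroup $P$ of $G$ contains a conjugate of it; and if $G \twoheadrightarrow \ZZ_\ell$ is a continuous surjection with kernel $N$, then $PN/N$ is a closed subgroup of $\ZZ_\ell$ of index dividing the prime-to-$\ell$ supernatural number $[G:P]$, so, since every closed subgroup of $\ZZ_\ell$ has index a power of $\ell$, one has $PN/N = \ZZ_\ell$ and $P$ surjects onto $\ZZ_\ell$. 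In either case $P$ contains an element of infinite order, and so is non-torsion.

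It then remains to exhibit $\ell$ and the relevant copy of $\ZZ_\ell$ in each case. In case (i), a compact $p$-adic analytic group of positive rank is infinite, so any open uniform pro-$p$ subgroup is an infinite torsion-free pro-$p$ group and hence contains a closed subgroup isomorphic to $\ZZ_p$ (the closure of any non-trivial procyclic subgroup), and we take $\ell = p$. In case (ii), the assumed $\ZZ_\ell$-subextension of $L/K$ is precisely a continuous surjection $G = \Gal(L/K) \twoheadrightarrow \ZZ_\ell$ (and if the hypothesis is instead read as furnishing a closed subgroup of $G$ isomorphic to $\ZZ_\ell$, the argument still applies). In case (iii), $G$ is pro-$p$, hence is its own Sylow $p$-subgroup, so it suffices to show that $G$ is not torsion; for this one uses the single genuinely arithmetic input, namely that every number field $K$ admits a cyclotomic $\ZZ_p$-extension, which gives a continuous surjection $\Gal(\overline{K}/K) \twoheadrightarrow \ZZ_p$ whose restriction to the Sylow pro-$p$ subgroup $G$ is again surjective by the mechanism of the previous paragraph. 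In all three cases Theorem~\ref{general result} now applies and gives the stated conclusion.

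All of the steps are essentially routine; the one needing a little care is the reduction from a continuous quotient $\ZZ_\ell$ of $G$ to a continuous quotient $\ZZ_\ell$ of a Sylow pro-$\ell$ subgroup, which rests on the incompatibility between the $\ell$-power index of any nontrivial closed subgroup of $\ZZ_\ell$ and the prime-to-$\ell$ index of a Sylow subgroup, exactly as spelled out above.
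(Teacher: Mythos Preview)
Your proof is correct and follows the same overall strategy as the paper---verifying in each case the two hypotheses of Theorem~\ref{general result}---but the specific verifications differ. For the countable basis in (ii) and (iii) you use the elementary observation that a number field or $p$-adic field has only countably many finite extensions, whereas the paper builds an explicit filtration via the Hermite--Minkowski theorem (respectively its local analogue). For the non-torsion Sylow subgroup in (ii), you argue directly via a supernatural-index calculation that a Sylow $\ell$-subgroup surjects onto the given $\ZZ_\ell$-quotient, while the paper instead splits the surjection $G \twoheadrightarrow \ZZ_\ell$ to obtain a closed copy of $\ZZ_\ell$ inside $G$. Most notably, in (iii) you need only the cyclotomic $\ZZ_p$-extension together with the same Sylow mechanism, whereas the paper invokes the much deeper fact (from \cite{BSJN}) that a Sylow $p$-subgroup of the absolute Galois group of a number field contains a free pro-$p$ group on countably many generators. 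Your route is thus more elementary and uniform; the paper's, by contrast, extracts additional structural information (an explicit torsion-free congruence filtration in (i), a large free subgroup in (iii)) that is not actually needed for the corollary itself.
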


\begin{proof} To prove claim (i) we recall Lazard \cite{Lazard} has shown that any compact $p$-adic analytic group is isomorphic to a closed subgroup of ${\rm GL}_n(\ZZ_p)$ for some $n$. It is then enough to note that, for any infinite subgroup $G$  of ${\rm GL}_n(\ZZ_p)$ the collection $\{ G\cap ({\rm I}_n + p^m\cdot {\rm M}_n(\ZZ_p))\}_{m \ge 1}$ is a countable basis of neighbourhoods of the identity that comprises open, torsion-free, pro-$p$ subgroups (that are normal in $G$).   

To prove claim (ii) we fix a finite extension $K$ of either $\QQ$ or $\QQ_p$, an algebraic closure $K^c$ of $K$ and a Galois extension $L$ of $K$ in $K^c$, with $G := \Gal(L/K)$, for which there exists an intermediate field $E$ for which $\Gamma := \Gal(E/K)$ is isomorphic to $\ZZ_\ell$. Then, for each natural number $n$, the composite $K(n)$ of all finite extensions $K'$ of $K$ inside $L$ with the property that the absolute value of the discriminant of $K'/\QQ$ is at most $n$ is a finite Galois extension of $K$. In the case of the number fields, respectively $p$-adic fields, this follows directly from the Hermite-Minkowski Theorem (cf. \cite[\S III.2]{neukirch}), respectively \cite[Prop. 14, II, \S5]{lang}. The groups $\{ \Gal(L/K(n))\}_{n \ge 1}$ then give a countable base of neighbourhoods of the identity of $G$. It is then enough to note $G$ is a semi-direct product $\Gal(L/E)\rtimes \Gamma$ that splits (as $\Gamma \cong \ZZ_\ell$), so that any Sylow $\ell$-subgroup of $G$ is not torsion. 

To prove claim (iii) we fix a number field $K$ and a Sylow $p$-subgroup $P$ of $\Gal(K^c/K)$. It is then enough to note that $P$ has a countable base of neighbourhoods of its identity (inherited from the countable base of $\Gal(K^c/K)$ constructed in claim (ii)) and a subgroup that is a free pro-$p$ group on countably many generators (for a proof of the latter fact, see Bary-Soroker et al \cite[\S3]{BSJN}).  
\end{proof}

\section{$n$-coherence results}\label{strong section} 

In this section we continue to use the notation fixed at the beginning of \S\ref{general proof section}, so that $R = \ZZ[[G]]$ and $R_n = \ZZ[\Gamma_n]$ with $\Gamma_n = G/N_n$. 

For a left $R$-module $M$ and homomorphism $\phi$ of such modules, and a non-negative integer $n$, we also set 
\[ M_{(n)} := R_n\otimes_{R}M, \quad \text{and}\quad  \phi_{(n)} := R_n\otimes_{R}\phi,\]
respectively regarded (naturally) as a left $R_n$-module and as a map of left $R_n$-modules. 

\subsection{Nakayama's Lemma}\label{nakayama section} We shall first prove a useful analogue of Nakayama's Lemma for the following category of $R$-modules.  

\begin{definition} An $R$-module $M$ is `pro-discrete' (with respect to the given filtration $\{N_n\}_n$ of $G$) if the natural map $M \to \varprojlim_n M_{(n)}$ is bijective.\end{definition}

\begin{remark} The ring $R = \varprojlim_nR_n$ is itself a pro-discrete $R$-module since $R_{(n)} = R_n$ for every $n$. In general, however, finitely-presented $R$-modules need not be  pro-discrete and the category of pro-discrete $R$-modules need not be abelian (cf. \cite[Rem. 3.12]{iit}). \end{remark} 

 

In the sequel, for any finitely generated left module $N$ over a ring $\Lambda$ we write $\mu_\Lambda(N)$ for the minimal number of generators of $N$.

\begin{proposition}\label{nakayama-lemma}
 If $G$ is a pro-$p$ group, and the $R$-module $M$ is pro-discrete, then the following claims are valid.
\begin{itemize}
\item[(i)] $M$ is finitely generated if and only if it contains a finite subset that, for every $n$, projects to give a set of generators of the $R_n$-module $M_{(n)}$. 
\item[(ii)] If there exists a natural number $d$ such that $\mu_{R_n}(M_{(n)})\le d$ for every $n$, then $M$ is finitely generated and  $\mu_{R}(M) \le \mu_{\ZZ}(M_{(0)}) + d$. 
\end{itemize}
\end{proposition}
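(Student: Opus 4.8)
The plan is to exploit the fact that $G$ is pro-$p$, so that each finite quotient ring $R_n = \ZZ_p[\Gamma_n]$ after pro-$p$ completion is a (semi-)local ring whose Jacobson radical contains $p$ and the augmentation-type ideals coming from the $p$-group $\Gamma_n$; this makes the ordinary Nakayama Lemma available \emph{level by level}. For claim (i), the forward implication is immediate, since if $x_1,\dots,x_t$ generate $M$ over $R$ then their images generate $M_{(n)} = R_n \otimes_R M$ over $R_n$ for every $n$. For the converse, suppose $S = \{x_1,\dots,x_t\} \subset M$ has the stated property, let $F = R^t$ and let $\psi\colon F \to M$ be the map sending the standard basis to $S$. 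The key point is that $\psi_{(n)}\colon F_{(n)} = R_n^t \to M_{(n)}$ is surjective for every $n$ by hypothesis. I would then show $\psi$ itself is surjective using pro-discreteness of $M$: form $C := \coker(\psi)$ and observe that right-exactness of $R_n \otimes_R (-)$ gives $C_{(n)} = \coker(\psi_{(n)}) = 0$ for all $n$; composing $M \to C$ with $C \to \varprojlim_n C_{(n)} = 0$ and using that $M \to \varprojlim_n M_{(n)} \to \varprojlim_n C_{(n)}$ factors through $M \to C$, together with injectivity of $M \to \varprojlim_n M_{(n)}$, forces $M \to C$ to be zero, i.e. $C = 0$. (One must be a little careful here: the issue is only that $\varprojlim$ is left exact, which is exactly what is needed — surjectivity of $C \to \varprojlim C_{(n)}$ is not required, only that the target is $0$.)

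For claim (ii), the strategy is to produce, for suitable generators at level $0$, a compatible system of liftings that works simultaneously at every level, and then invoke claim (i). Concretely, pick elements $z_1,\dots,z_e \in M$ with $e = \mu_{\ZZ}(M_{(0)})$ whose images generate $M_{(0)} = M/IM$ (where $I = \ker(R \to \ZZ)$ is the augmentation ideal). The real work is to adjoin at most $d$ further elements $w_1,\dots,w_d$ so that $\{z_i\}\cup\{w_j\}$ generates every $M_{(n)}$. I would argue by a successive-approximation / inverse-limit construction over $n$: at level $n$, since $\mu_{R_n}(M_{(n)}) \le d$ and $R_n$ (pro-$p$-completed) is local with radical $\fm_n$, Nakayama says a subset of $M_{(n)}$ generates iff its image in $M_{(n)}/\fm_n M_{(n)}$ does, and $M_{(n)}/\fm_n M_{(n)} = M_{(0)}/(\text{image of }\fm_0)$-type quotient has dimension $\le d$ over $\FF_p$ (here one uses that $\Gamma_n$ is a $p$-group so $R_n/\fm_n = \FF_p$ and that $M_{(0)} \to M_{(n)}/\fm_n M_{(n)}$ is surjective). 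Thus at each level one needs at most $d$ elements beyond the $z_i$; the transition maps $M_{(n+1)} \to M_{(n)}$ are surjective (as $R_{n+1} \to R_n$ is), so one can choose the required $d$-element sets compatibly down the tower, realize the inverse limit of these choices as genuine elements $w_1,\dots,w_d$ of $M = \varprojlim_n M_{(n)}$, and then $\{z_1,\dots,z_e,w_1,\dots,w_d\}$ meets the hypothesis of claim (i). This gives $\mu_R(M) \le e + d = \mu_{\ZZ}(M_{(0)}) + d$.

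The main obstacle I anticipate is the compatible-lifting step in (ii): one needs the chosen extra generators at level $n+1$ to map to a valid choice of extra generators at level $n$, which is not automatic because a set that generates $M_{(n+1)}$ need not restrict to a \emph{minimal-size} generating set at level $n$ — but minimality is not actually required, only that the cardinality stays $\le d$, so the correct formulation is to work with the sets $G_n := \{\,\text{size-}d\text{ subsets of }M_{(n)}\text{ whose union with the }z_i\text{ generates }M_{(n)}\,\}$, check each $G_n$ is non-empty (this is where $\mu_{R_n}(M_{(n)}) \le d$ and $\FF_p$-dimension counting enter) and that the transition maps send $G_{n+1}$ into $G_n$, and then take an element of $\varprojlim_n G_n$, which is non-empty since the $G_n$ are non-empty and the maps $G_{n+1} \to G_n$ are surjective (again by surjectivity of $M_{(n+1)} \to M_{(n)}$, after a small argument that a generating set at level $n$ lifts to one at level $n+1$ of the same size, using Nakayama at level $n+1$). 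A secondary subtlety is keeping track of whether one works with $R_n$ or its pro-$p$ completion $R_n^p$; since $M_{(n)}$ need not be finitely generated over $\ZZ$ a priori, one should either first deduce finite generation over $\ZZ_p$ after pro-$p$ completion or observe that the Nakayama argument only uses the quotient $M_{(n)}/\fm_n M_{(n)}$, which is an $\FF_p$-vector space of dimension $\le d$ regardless.
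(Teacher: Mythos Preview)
Your argument for (i) has a logical gap. From $C_{(n)}=0$ for all $n$ you get $\varprojlim_n C_{(n)}=0$, but this does not force $C=0$: for that you would need the natural map $C\to\varprojlim_n C_{(n)}$ to be \emph{injective}, and there is no reason for the cokernel $C$ to be pro-discrete (the paper notes this category is not abelian). The injectivity of $M\to\varprojlim_n M_{(n)}$ is irrelevant here. The paper instead shows directly that $R^t\to M\cong\varprojlim_n M_{(n)}$ is surjective by proving $\varprojlim^1_n\ker(\psi_{(n)})=0$: a Snake-Lemma computation shows the transition maps $\ker(\psi_{(n+1)})\to\ker(\psi_{(n)})$ are surjective (the key point being that $\ker(M_{(n+1)}\to M_{(n)})=J_{n+1}\cdot M_{(n+1)}$ is the image of $J_{n+1}\cdot R_{n+1}^t$ under $\psi_{(n+1)}$), and then Mittag--Leffler applies.

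For (ii), the lifting step you rely on is not a consequence of Nakayama's Lemma. The ideal $J_{n+1}=\ker(R_{n+1}\to R_n)$ is \emph{not} in the Jacobson radical of $R_{n+1}=\ZZ[\Gamma_{n+1}]$ (a $\ZZ$-order in a semisimple $\QQ$-algebra has zero Jacobson radical), so Nakayama over $R_{n+1}$ gives nothing. If you lift $W$ arbitrarily to $\tilde W$, then Nakayama over $\ZZ_p[\Gamma_{n+1}]$, where $J_{n+1}$ \emph{is} in the radical, shows only that $R_{n+1}\cdot(\{z_i\}\cup\tilde W)$ has finite \emph{prime-to-$p$} index in $M_{(n+1)}$, not that it is everything; and passing wholesale to $\ZZ_p$-coefficients would merely prove the result for the Noetherian ring $\ZZ_p[[G]]$. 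The paper confronts this obstruction by separating the two behaviours: it builds a compatible family of $\kappa$-element sets $X_n\subset M_{(n)}$ (with $\kappa=\mu_\ZZ(M_{(0)})$) whose $R_n$-span $M_n$ has prime-to-$p$ index, using Nakayama over $\ZZ_p\otimes R_n$ together with a $\Tor_1$-vanishing argument to guarantee $R_{n'}\otimes_{R_n}M_n\cong M_{n'}$; this yields a pro-discrete submodule $M'=\varprojlim_n M_n$ with $\mu_R(M')\le\kappa$ via (i). The quotient $Q=\varprojlim_n(M_{(n)}/M_n)$ is then handled by observing that each $Q_n$ has prime-to-$p$ order, hence is a $\ZZ[1/p][\Gamma_n]$-module on which the central idempotent attached to $N_{n-1}/N_n$ gives a splitting $Q_n\cong(1-e_n)Q_n\oplus Q_{n-1}$, allowing an inductive construction of a compatible $d$-element generating family for $Q$.
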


\begin{proof} To prove claim (i), it is enough to show that  any finite subset $\{ z_i := (z_{i,n})_n\}_{1\le i\le m}$ of $M \cong \varprojlim_nM_{(n)}$ with the stated property generates $M$ over $R$. To do this, we consider, for each $n$, the exact commutative diagram 
\begin{equation}\label{key diagram}\begin{CD}& & & & \ker(\rho^{\oplus m}_{n+1}) @> \iota_{n+1}' >> \ker(\theta_{n+1}) \\
 @. @. @V VV @V VV\\
          0@> >>  \ker(\iota_{n+1}) @> >>  R_{n+1}^m @> \iota_{n+1} >> M_{(n+1)} @> >> 0 \\
            @. @V \theta'_{n+1}VV @V\rho^{\oplus m}_{n+1} VV @V\theta_{n+1} VV\\
          0@> >>  \ker(\iota_{n}) @> >>  R_{n}^m @> \iota_n >> M_{(n)} @> >> 0.\end{CD} \end{equation}
Here $\iota_n$ is the (assumed to be surjective) map of $R_{n}$-modules that sends the $i$-th element in the standard basis of $R_{n}^m$ to $z_{i,n}$, $\rho_{n+1}$ is the natural map $R_{n+1} \to R_{n}$, $\theta_{n+1}$ is the canonical map and $\theta'_{n+1}$ and $\iota_{n+1}'$ are the respective restrictions of $\rho_{n+1}^{\oplus m}$ and $\iota_{n+1}$. 

Write $J_{n+1}$ for the submodule of $R_{n+1}$ generated by  
$\{h-1:h \in N_{n}/N_{n+1}\}$. Then the map $\rho^{\oplus m}_{n+1}$ is surjective, with kernel equal to the submodule $J_{n+1}^{\oplus m}$ of $R_{n+1}^m$, and it is clear that   
$\ker(\theta_{n+1}) = J_{n+1}\cdot M_{(n+1)}$. It follows that $\iota_{n+1}'$ is surjective and hence, by applying the Snake Lemma to (\ref{key diagram}), that $\theta'_{n+1}$ is  surjective. This last fact implies (via the Mittag-Leffler criterion) that the derived limit ${\varprojlim}^1_n\ker(\iota_n)$ with respect to the maps $\theta'_n$ vanishes. Upon passing to limit over $n$ of the commutative diagrams given by the second and third rows of (\ref{key diagram}), one therefore deduces that the map of $R$-modules 
\[ R^m = \varprojlim_n R_{n}^m \to \varprojlim_n M_{(n)} \cong M\]
that sends the $i$-th element in the standard basis of $R^m$ to $z_i$ is surjective. It follows that $M$ is finitely generated, as required. 
    
To prove claim (ii) we note $R_{0} = \ZZ$ and set $\kappa:= \mu_{\ZZ}(M_{(0)}) \le d$. We show first that, for each $n$, there exists a subset $X_n := \{ x_{i,n}\}_{1\le i\le \kappa}$ of $M_{(n)}$ with the following two properties: 
\begin{itemize}
\item[(P1)]  the $R_n$-submodule $M_n$ of $M_{(n)}$ generated by $X_n$ has finite, prime-to-$p$ index;
\item[(P2)]  for each $n'< n$, the natural map $M_{(n)} \to M_{(n')}$ sends $x_{i,n}$ to $x_{i,n'}$ for every index $i$ and also induces an isomorphism of $R_{n'}$-modules $R_{n'}\otimes_{R_n}M_n \cong M_{n'}$.
\end{itemize}
To establish this we use induction on $n$. For $n=0$ the necessary conditions are satisfied by taking $X_0$ to be any set of generating elements for the (assumed to be finitely generated) abelian group $M_{(0)}$ (so that $M_0 = M_{(0)}$). For the inductive step we fix $n >0$ and assume that suitable sets $X_m$ have been constructed for each $m<n$. For each
index $i$ with $1\le i \le \kappa$ we then fix a pre-image $x_{i,n}$ of $x_{i,n-1}$ under the projection map $\theta_{n}: M_{(n)} \to M_{(n-1)}$, set $X_n := \{ x_{i,n}\}_{1\le i\le \kappa}$ and  write
$M_n$ for the $R_n$-submodule of $M_{(n)}$ generated by $X_n$. It is then clear that 
\[ \ZZ_p\otimes_{\ZZ} \theta_{n}(M_{n}) = \ZZ_p\otimes_{\ZZ} M_{n-1} = \ZZ_p\otimes_{\ZZ} M_{(n-1)} = \ZZ_p\otimes_{\ZZ} \theta_{n}(M_{(n)}),\]
where the second equality is a consequence of (P1) (for $n-1$), and hence that 

\[ \ZZ_p\otimes_{\ZZ} M_{(n)} = \ZZ_p\otimes_{\ZZ} M_{n} + \ZZ_p\otimes_{\ZZ}\ker(\theta_{n}) = \ZZ_p\otimes_{\ZZ} M_{n} +  J_{n}\cdot (\ZZ_p\otimes_{\ZZ} M_{(n)}). \]
Now, since $N_{n-1}/N_{n}$ is a finite $p$-group, the ideal $J_{n}$ belongs to the Jacobson radical of $\ZZ_p\otimes_{\ZZ} R_{n}$ and so the last displayed equality combines with Nakayama's Lemma to imply that $\ZZ_p\otimes_{\ZZ} M_{(n)} = \ZZ_p\otimes_{\ZZ} M_{n}$. It follows that the index of $M_{n}$ in $M_{(n)}$ is finite and prime to $p$, and hence that (P1) is satisfied. The first property in (P2) is also clear for this construction, and the second property is true provided that the natural map 
\[ R_{n'}\otimes_{R_n} M_n \to R_{n'}\otimes_{R_n} M_{(n)} \cong M_{(n')}\]
is injective. However, the kernel of this map is isomorphic to a quotient of the group 
\[ {\Tor}_1^{R_n}(R_{n'}, M_{(n)}/M_n) \cong {\Tor}_1^{\ZZ[N_{n'}/N_{n}]}(\ZZ, M_{(n)}/M_n) \cong H_1(N_{n'}/N_{n},
M_{(n)}/M_n),\]
and the latter group vanishes since $N_{n'}/N_{n}$ is a finite $p$-group whilst the order of $M_{(n)}/M_{n}$ is prime to $p$.  

For each $n >0$ we now consider the exact commutative diagram 
\begin{equation}\begin{CD}\label{key diagram2}
0 @> >> {\prod}_{n} M_n @> (\iota_n)_n >> {\prod}_n M_{(n)} @> >> {\prod}_n Q_n @> >> 0\\
& & @V(1-\theta'_n)_n VV @V(1-\theta_n)_n VV @V(1-\theta''_n)_n VV\\
0 @> >> {\prod}_{n} M_n @> (\iota_n)_n >> {\prod}_n M_{(n)} @> >> {\prod}_n Q_n @> >> 0
\end{CD}\end{equation}
%
%
in which $\iota_n$ denotes the natural inclusion map, $Q_n := {\rm cok}(\iota_n)$, $\theta_n'$ is the restriction of $\theta_n$ and $\theta_n''$ is induced by $\theta_n$. In particular, since the maps $\theta_n'$ are surjective, the Snake Lemma applies to this diagram to give a short exact sequence of $R$-modules 
\begin{equation}\label{reduced ses} 0 \to M' \to M \to Q\to 0,\end{equation}
in which we set $M':= \varprojlim_n M_n$ and $Q:= \varprojlim_n Q_n$ (with the respective limits taken with respect to the maps $\theta_n'$ and $\theta_n''$). In addition, the final assertion of property (P2) implies that $M'$ is a pro-discrete $R$-module and also combines with claim (i) to imply that  $\{ (x_{i,n})_n\}_{1\le i\le \kappa}$ is a generating set for $M'$ and hence that 
\begin{equation}\label{first inequality} \mu_{R}(M') \le \kappa = \mu_{\ZZ}(M_{(0)}).\end{equation} 

To establish $M$ is finitely generated with $\mu_{R}(M) \le \mu_{\ZZ}(M_{(0)}) + d$, we are therefore reduced, via the exact sequence (\ref{reduced ses}) and inequality (\ref{first inequality}), to showing that $\mu_{R}(Q)\le d$. 

In addition, since $M'$ and $M$ are both pro-discrete, the  commutative diagrams (\ref{key diagram2}) can be used to show that, for each $n'<n$, the natural map $R_{n'}\otimes_{R_n}Q_n \to Q_{n'}$ is bijective and hence that $Q$ is a pro-discrete $R$-module. In view of claim (i), we are therefore reduced to constructing a subset $Z$ of $Q \cong \varprojlim_nQ_n$ with the property that, for every $n$, the $R_n$-module $Q_n$ is generated by the projection of $Z$. 

Now, for each $n$, the module $Q_n$ has order prime-to-$p$ and so is naturally a module over $R_n[1/p]= \ZZ[1/p][\Gamma_n]$. In particular, the central idempotent 
\[ e_n := |(N_{n-1}/N_n)|^{-1}{\sum}_{\gamma \in N_{n-1}/N_n}\gamma\]
of $R_n[1/p]$ induces a direct sum decomposition of $R_n$-modules 
\[ Q_n = (1-e_n)Q_n \oplus e_nQ_n \cong (1-e_n)Q_n \oplus (R_{n-1}\otimes_{R_n}Q_n) \cong (1-e_n)Q_n \oplus Q_{n-1}. \]
To inductively construct a suitable subset $Z$ of $Q$, it is therefore enough to show, for every $n$, that $\mu_{R_n}\bigl((1-e_n)Q_n \bigr) \le d$. This in turn follows immediately from the fact that $Q_n$ is quotient of $M_{(n)}$ and, by assumption, one has $\mu_{R_n}(M_{(n)}) \le d$. 

This completes the proof of claim (ii). 
\end{proof}

\subsection{Ddivisibility of Tor-groups}\label{strong proof section0} 

For an abelian group $A$ and natural number $m$ we set $A[m]:= \{a\in A:m\cdot a = 0\}$. We also write $A_{\langle p\rangle}$ for the inverse limit $\varprojlim_{m\in \mathbb{N}} A/p^m$ (with respect to the natural projection maps), and use similar notation for homomorphisms. For a ring $R$, we write ${\rm pd}_R(M)$ for the projective dimension of a left $R$-module $M$. We also recall that, for any natural number $n$, a left $R$-module $M$ is said to be `finitely $n$-presented' if there exists a collection of natural numbers $\{t_i\}_{0\le i\le n}$ and an exact sequence of (left) $R$-modules of the form 
\begin{equation}\label{fixed resolution20}  0 \to \ker(\theta_n) \xrightarrow{\iota} R^{t_{n}} \xrightarrow{\theta_{n}} R^{t_{n-1}} \cdots \xrightarrow{\theta_1} R^{t_0} \xrightarrow{\theta_0}   M \to 0.\end{equation}

The following technical result will be useful for the proof of Theorem \ref{strong coherent thm}. 

\begin{proposition}\label{upd lemma0} Let $G$ be a profinite group and $M$ a finitely generated left $\ZZ[[G]]$-module with the following properties: 
\begin{itemize}
\item[(i)] $M[p] = (0)$. 
\item[(ii)] $M$ is finitely $n$-presented, for some natural number $n$.
\item[(iii)] ${\rm pd}_{\ZZ_p[[G]]}\bigl(M_{\langle p\rangle}\bigr) < n$.
\end{itemize}
Then, for any $\ZZ[[G]]$-module $L$ with $L[p]=(0)$, and any integer $a \ge n$, the higher Tor-group $\Tor_a^{\ZZ[[G]]}(L,M)$ is $p$-divisible. \end{proposition}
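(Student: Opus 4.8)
The plan is to compute $\Tor_a^R(L,M)$, with $R = \ZZ[[G]]$, by using hypothesis (ii) to represent $M$ by the finite $n$-presentation (\ref{fixed resolution20}), and then to exploit hypothesis (iii), together with the short exact sequence $0 \to \ZZ \to \ZZ \to \ZZ/p \to 0$ (and the analogous sequence for $\ZZ_p$), to show that multiplication by $p$ is surjective on these groups for $a \ge n$. The key structural input is that, since $M$ is finitely $n$-presented, the truncated complex $R^{t_n} \to R^{t_{n-1}} \to \cdots \to R^{t_0}$ is a complex of finitely generated free $R$-modules, so that for any $R$-module $L$ one has $\Tor_a^R(L,M) = H_a(L \otimes_R P_\bullet)$ for $a < n$ and, more importantly, $\Tor_a^R(L,M) \cong \Tor_{a-n+1}^R(L,K)$ for $a \ge n$, where $K := \ker(\theta_n)$ is the ($n$-th syzygy) module appearing in (\ref{fixed resolution20}). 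Reducing to $K$ is what makes hypothesis (iii) usable.

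First I would pin down the interaction with the pro-$p$ completion. Because $P_\bullet$ consists of finitely generated free $R$-modules and $R_{\langle p\rangle} = \ZZ_p[[G]] =: R^p$, applying $R^p \otimes_R (-)$ to the top of the resolution and using $M[p]=(0)$ (so that $M_{\langle p\rangle} = \varprojlim_m M/p^m M$ has a clean relation to $R^p \otimes_R M$, namely $R^p \otimes_R M$ surjects onto $M_{\langle p\rangle}$ with kernel killed by no power of $p$ and in fact the finite-presentation hypothesis forces $R^p\otimes_R M \cong M_{\langle p\rangle}$ up to the relevant Tor-controlled discrepancy) one obtains that $R^p \otimes_R K$ is, up to a $p$-divisible correction, the $n$-th syzygy of $M_{\langle p\rangle}$ over $R^p$. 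Hypothesis (iii) says $\mathrm{pd}_{R^p}(M_{\langle p\rangle}) < n$, which means precisely that this $n$-th syzygy over $R^p$ is projective. Hence $R^p\otimes_R K$ is (stably) $R^p$-projective, and therefore $\Tor_b^{R^p}(N, R^p\otimes_R K) = 0$ for all $b \ge 1$ and all $R^p$-modules $N$.

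Now I would run the $p$-divisibility argument. Fix $L$ with $L[p]=(0)$ and $a \ge n$; write $T := \Tor_a^R(L,M) \cong \Tor_{a-n+1}^R(L,K)$ with $a-n+1 \ge 1$. From $0 \to L \xrightarrow{p} L \to L/p \to 0$ one gets a long exact sequence, and since $L[p]=0$ the map $p: T \to T$ has cokernel controlled by $\Tor_{a-n+1}^R(L/pL, K)$ together with a piece of $\Tor_{a-n}(L/pL,K)$; more precisely, surjectivity of $p$ on $T$ will follow once one knows the connecting-map image, i.e. a subquotient of $\Tor_{a-n+1}^R(L/pL, K)$, vanishes in the relevant degree. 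But $L/pL$ is an $\FF_p$-module, hence an $R^p$-module (indeed an $R/pR$-module), so $\Tor_\ast^R(L/pL, K)$ can be computed after base change along $R \to R^p$: by the spectral sequence (or the change-of-rings sequence) relating $\Tor^R$ and $\Tor^{R^p}$, these groups are assembled from $\Tor_\ast^{R^p}(L/pL, R^p\otimes_R K)$ and $\Tor_\ast^R(R^p,K)$. The first family vanishes in positive degrees by the projectivity established above; for the second, $\Tor_\ast^R(R^p, K)$ in positive degrees is $p$-divisible (it is a module over $R^p$ that is also a filtered-colimit-of-$p$-power-torsion phenomenon — concretely $R^p/R$ and its derived analogues are $p$-divisible, exactly as in the final step of the proof of Theorem \ref{general result} where $\ZZ_p/\ZZ$ appears), and a $p$-divisible obstruction group still forces $p:T\to T$ to be surjective after one more diagram chase. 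Assembling these vanishings/divisibilities shows $p \cdot T = T$, i.e. $T$ is $p$-divisible.

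The main obstacle I anticipate is the second point above: controlling the change-of-rings from $R$ to $R^p = \ZZ_p[[G]]$ precisely enough. The clean statement "$R^p \otimes_R (-)$ is exact and $R^p\otimes_R K$ is the $n$-th $R^p$-syzygy of $M_{\langle p\rangle}$" is not literally true on the nose — $R^p$ is only flat over $R$ in a completed sense, and the discrepancy between $R^p\otimes_R M$ and $M_{\langle p\rangle}$, and between $R^p\otimes_R K$ and $\ker$ of the completed map, is exactly a collection of groups that one must show are $p$-divisible (or vanish) using that $M$ and $K$ are finitely generated over $R$ together with hypotheses (i) and (ii). I would isolate this as a lemma: for a finitely $n$-presented $M$ with $M[p]=0$, one has $R^p \otimes_R K \cong \Omega^n_{R^p}(M_{\langle p\rangle}) \oplus (\text{free}) \oplus (\text{$p$-divisible})$, and then hypothesis (iii) kills the syzygy summand. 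Once that lemma is in hand, the long-exact-sequence bookkeeping of the last paragraph is routine.
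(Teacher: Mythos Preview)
Your approach has a genuine gap at the change-of-rings step, and the paper's argument is structured quite differently to avoid exactly this.

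The central difficulty is that you are trying to pass from $R := \ZZ[[G]]$ to $R^p := \ZZ_p[[G]]$ via the functor $R^p\otimes_R(-)$. You correctly flag this as the ``main obstacle'', but the proposed lemma does not resolve it. Hypothesis (iii) controls $K_{\langle p\rangle} := \varprojlim_m K/p^mK$ (which the paper shows is literally the kernel of $\theta_{n,\langle p\rangle}$, hence the $(n{+}1)$-st $\Lambda$-syzygy of $M_{\langle p\rangle}$, hence projective), not $R^p\otimes_R K$, and these are different: there is a natural comparison map but $K$ is \emph{not} known to be finitely generated (indeed, in the intended application, proving finite generation of $K$ is the goal), so one cannot expect $R^p\otimes_R K \cong K_{\langle p\rangle}$. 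Your change-of-rings spectral sequence for $R\to R^p$ then brings in the terms $\Tor_j^R(R^p,K)$, which are precisely the obstruction to $R^p$-flatness over $R$, and controlling these is as hard as the original problem. Saying they are ``$p$-divisible'' is not enough: you would need them \emph{uniquely} $p$-divisible to kill $\Tor_\ast^{R^p}(L/p,-)$, and there is no evident reason for that.

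The paper sidesteps this entirely by never tensoring with $R^p$. Instead it uses the hypothesis $M[p]=0$ to observe that the finite free resolution (\ref{fixed resolution20}) remains exact after reduction modulo $p^m$ for every $m$, and (by Mittag--Leffler) after passing to the inverse limit over $m$; this produces a genuine partial $\Lambda$-free resolution of $M_{\langle p\rangle}$. The change-of-rings that is actually used is $R\to R/p^m$: here the relevant spectral sequence collapses immediately because $\Tor_c^R(M,R/p^m)\cong M[p^m]=0$ for $c\ge 1$, giving $\Tor_n^R(L/p^m,M/p^m)\cong \Tor_n^{R/p^m}(L/p^m,M/p^m)$. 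Combining with the injection $\Tor_n^R(L,M)/p^m \hookrightarrow \Tor_n^R(L/p^m,M/p^m)$ (from the two multiplication-by-$p^m$ sequences) and an explicit Mittag--Leffler argument, one finds that $\Tor_n^R(L,M)_{\langle p\rangle}$ embeds into a subquotient of $\Tor_n^\Lambda(L_{\langle p\rangle},M_{\langle p\rangle})$, which vanishes by (iii). Finally, the case $a>n$ is handled by dimension-shifting in $L$ (not in $M$): replace $L$ by a first syzygy $L'$, which again has $L'[p]=0$, and use $\Tor_a^R(L,M)\cong \Tor_{a-1}^R(L',M)$.

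A minor point: your dimension-shift formula $\Tor_a^R(L,M)\cong \Tor_{a-n+1}^R(L,K)$ is off by two, since $K=\ker(\theta_n)$ is the $(n{+}1)$-st syzygy of $M$; in particular it gives nothing for $a=n$. The paper's choice to shift in $L$ rather than $M$ avoids this bookkeeping and, more importantly, keeps the hypotheses on $M$ intact for the inductive step.
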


\begin{proof} Set $R := \ZZ[[G]]$ and $\Lambda := R_{\langle p\rangle} = \ZZ_p[[G]]$. We first make an easy observation about short exact sequences. For this we note that, if $M_3$ is any $R$-module with $M_3[p]=(0)$, then a short exact sequence of $R$-modules $0\to M_1\to M_2\to M_3\to 0$ gives rise, for each natural number $m$, to an exact commutative diagram
\[ \begin{CD} 
0 @> >> M_1/p^{m+1} @> >> M_2/p^{m+1} @> >> M_3/p^{m+1} @> >> 0\\
& & @V \varrho_{1,m}VV @V \varrho_{2,m}VV @V\varrho_{3,m} VV\\
0 @> >> M_1/p^m @> >> M_2/p^m @> >> M_3/p^m @> >> 0,\end{CD}\]
in which each map $\varrho_{i,m}$ is the natural projection. Then, since $\varrho_{1,m}$ is surjective, the Mittag-Leffler criterion implies that, upon passing to the limit over $m$ of these sequences, one obtains a short exact sequence of $\Lambda$-modules $0\to M_{1,\langle p\rangle}\to M_{2,\langle p\rangle}\to M_{3,\langle p\rangle}\to 0$. 

Turning now to the proof of the stated result, property (ii) allows us to fix an exact sequence of $R$-modules of the form (\ref{fixed resolution20}). Then, under condition (i), this sequence breaks up into a finite collection of short exact sequences in which no occurring term has an element of order $p$. Hence, by applying the above observation to each of these short exact sequences, one deduces firstly that for each  $m$ the induced sequence 
\begin{equation}\label{fixed resolution2}  0 \to \ker(\theta_n)/p^m \xrightarrow{\iota/p^m} (R/p^m)^{t_{n}}  \xrightarrow{\theta_{n}} (R/p^m)^{t_{n-1}} \cdots \xrightarrow{\theta_1} (R/p^m)^{t_0} \to  M/p^m \to 0\end{equation}
is exact and then, upon passing to the limit over $m$, that the induced sequence of $\Lambda$-modules 
\begin{equation}\label{fixed resolution3} 0 \to \ker(\theta_n)_{\langle p\rangle} \xrightarrow{\iota_{\langle p\rangle}} \Lambda^{t_{n}} \xrightarrow{\theta_{n,\langle p\rangle}} \Lambda^{t_{n-1}} \cdots \xrightarrow{\theta_{1,\langle p\rangle}} \Lambda^{t_0} \to M_{\langle p\rangle} \to 0.\end{equation}
is also exact. By using this sequence to compute Tor-groups, one obtains an isomorphism
\begin{equation}\label{first iso} \Tor^{\Lambda}_{n}(L_{\langle p\rangle},M_{\langle p\rangle}) \cong 
\frac{ \ker\bigl( L_{\langle p\rangle}\otimes_{\Lambda}\theta_{n,\langle p\rangle}\bigr)}
{\im\bigl( L_{\langle p\rangle}\otimes_{\Lambda}\iota_{\langle p\rangle}\bigr)}.
\end{equation}
To compute this group, we note that, for each index $i$, the module $L_{\langle p\rangle}\otimes_{\Lambda} \Lambda^{t_i}$ identifies with $(L_{\langle p\rangle})^{t_i} = \varprojlim_m ((L/p^m) \otimes_{R/p^m} (R^{t_i}/p^m))$. In particular, since inverse limits are left exact, this observation (with $i=n$ and $i=n-1$) gives an equality   
\[ \ker\bigl( L_{\langle p\rangle}\otimes_{\Lambda}\theta_{n,\langle p\rangle}\bigr) = 
\varprojlim_m \ker( (L\otimes_R \theta_n)/p^m) = \varprojlim_m \ker((L/p^m)\otimes_{R/p^m}(\theta_n/p^m)),\]
where the limits are taken with respect to the transition maps induced by the projections $(L/p^m)^{t_n} \to (L/p^{m-1})^{t_n}$. 
In a similar way, one finds that there is a corresponding inclusion 
\[\im\bigl( L_{\langle p\rangle}\otimes_{\Lambda}\iota_{\langle p\rangle}\bigr) \subseteq \varprojlim_m \im((L/p^m)\otimes_{R/p^m}\iota_m).\]
 The isomorphism (\ref{first iso}) therefore induces a surjective composite map of $\Lambda$-modules 
\begin{align}\label{key comp} \Tor^{\Lambda}_{n}(L_{\langle p\rangle},M_{\langle p\rangle}) \twoheadrightarrow&\,\,   
\frac{ \varprojlim_m \ker((L/p^m)\otimes_{R/p^m}(\theta_n/p^m))}{\varprojlim_m \im((L/p^m)\otimes_{R/p^m}\iota_m)}\\
\cong&\,\,
\varprojlim_m\frac{\ker((L/p^m)\otimes_{R/p^m}(\theta_n/p^m))}{\im((L/p^m)\otimes_{R/p^m}\iota_m)}\notag\\
\cong&\,\, \varprojlim_m \Tor^{R/p^m}_{n}(L/p^m,M/p^m).\notag\end{align}
Here the first isomorphism follows from the Mittag-Leffler criterion since the projections 
\[ \im((L/p^m)\otimes_{R/p^m}\iota_m)\to \im((L/p^{m-1})\otimes_{R/p^{m-1}}\iota_{m-1})\]
are surjective, and the second is obtained by computing the groups 
$\Tor^{R/p^m}_{n}(L/p^m,M/p^m)$ via the resolutions (\ref{fixed resolution2}).

Next we note that (since $L[p]$ and $M[p]$ vanish) there are short exact sequences 
\[ 0 \to L\xrightarrow{p^m} L \to L/p^m\to 0 \quad\text{and} \quad 0 \to M\xrightarrow{p^m} M \to M/p^m\to 0\]
which combine to give a composite injective homomorphism of abelian groups
\begin{equation}\label{inj map} \Tor_{n}^{R}( L,M)/p^m \hookrightarrow \Tor_{n}^{R}( L/p^m,M/p^m)
\cong \Tor_{n}^{R/p^m}( L/p^m,M/p^m).\end{equation}
Here the isomorphism is induced by the fact that the standard spectral sequence 
\[ \Tor_{b}^{R/p^m}\bigl(L/p^m,\Tor_c^{R}( M, R/p^m )\bigr) \Longrightarrow \Tor_{b+c}^{R/p^m}( L/p^m,M/p^m)\]
collapses on its first page since $\Tor_c^{R}( M, R/p^m )$ vanishes for all $c > 0$ (as  
$\Tor_1^{R}( M, R/p^m )$ is isomorphic to $M[p^m]$). After taking the inverse limit over $m$ of the maps (\ref{inj map}), we deduce from (\ref{key comp}) that $\Tor_{n}^{R}( L,M)_{\langle p\rangle}$ is isomorphic to a subquotient of 
$\Tor^{\Lambda}_{n}(L_{\langle p\rangle},M_{\langle p\rangle})$. 

In particular, since property (iii) implies that $\Tor^{\Lambda}_{n}(L_{\langle p\rangle},M_{\langle p\rangle})$ vanishes, the module $\Tor_{n}^{R}( L,M)_{\langle p\rangle}$ must also vanish and so the group $\Tor_{n}^{R}( L,M)$ is $p$-divisible. 

This proves the stated claim with $a=n$. To prove the same result for all $a>n$, one can then use an induction on $a$. The key point for this is that, if 
$0 \to L'\to F\to L\to 0$ is any short exact sequence of left $R$-modules in which $F$ is free, then one has $L'[p]=(0)$ and also, since $a-1 >n-1\ge 0$, the natural exact sequence 
\[ (0) = \Tor_{a}^{R}( F,M) \to \Tor_{a}^{R}( L,M)\to \Tor_{a-1}^{R}( L',M) \to \Tor_{a-1}^{R}( F,M) =(0)\]
implies $\Tor_{a}^{R}( L,M)$ is isomorphic to $\Tor_{a-1}^{R}( L',M)$.  
\end{proof}

\subsection{The proof of Theorem \ref{strong coherent thm}}\label{strong proof section} We henceforth fix a group $G$ as in Theorem \ref{strong coherent thm}, and continue to set $R := \ZZ[[G]]$. We also now fix natural numbers $n$ and $\{t_i\}_{0\le i\le n}$ and an exact sequence of left $R$-modules of the form (\ref{fixed resolution20}). 
%
%

We recall that Costa \cite{costa} defines $R$ to be `left $n$-coherent' if, for every such sequence, the $R$-module $\ker(\theta_{n})$ is finitely generated. (This property is labelled as `strong left $n$-coherence' by Dobbs et al \cite{dobbsetal}, and more conceptual treatments are given by Zhu \cite{zhu} and  Bravo and  P\'erez \cite{bravo}). We note, in particular, that $R$ is left $1$-coherent if and only if it is left coherent in the classical sense of Chase \cite{chase} and Bourbaki \cite{bourbaki}, and we recall that if $R$ is left $n$-coherent, then it is automatically left $n'$-coherent for every  $n'>n$.

We start by recording a useful technical result.  

\begin{lemma}\label{reduction lemma} If $U$ is an open subgroup of $G$, then $R$ is left $n$-coherent if and only if $\ZZ[[U]]$ is left $n$-coherent. \end{lemma}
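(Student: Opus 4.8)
The plan is to prove Lemma \ref{reduction lemma} by exploiting the fact that $R = \ZZ[[G]]$ is a \emph{free} module of finite rank over the subring $S := \ZZ[[U]]$, where the rank equals the index $[G:U]$. Indeed, choosing a set of coset representatives $\{g_1,\dots,g_e\}$ for $U$ in $G$ (with $e := [G:U]$), a standard continuity/limit argument over the open normal subgroups $N_n \subseteq U$ (shrinking the filtration so that $N_0 = U$, as we may by Lemma \ref{reduction lemma} being insensitive to the choice of basis) shows that $R = \bigoplus_{i=1}^e S g_i$ as a left $S$-module, and symmetrically as a right $S$-module. In particular $R$ is faithfully flat and finitely generated projective (in fact free) on both sides over $S$, and $S$ is a direct summand of $R$ as an $(S,S)$-bimodule.

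The key observation is then a change-of-rings comparison for finite $n$-presentations. First I would show the forward implication: if $R$ is left $n$-coherent and $M$ is a left $S$-module admitting a finite $n$-presentation over $S$, say with exact sequence $0 \to K \to S^{t_n} \to \cdots \to S^{t_0} \to M \to 0$, then applying the exact functor $R \otimes_S (-)$ (exact because $R$ is flat over $S$) produces a finite $n$-presentation of the left $R$-module $R \otimes_S M$ with the \emph{same} ranks $t_i$, and with $R \otimes_S K$ in the kernel position. By left $n$-coherence of $R$, $R \otimes_S K$ is finitely generated over $R$; but then, since $S$ is a direct summand of $R$ as a left $S$-module (via the projection $R \twoheadrightarrow S$ onto the $g_1 = 1$ component), $K$ is a direct summand of $R \otimes_S K$ as a left $S$-module, hence also finitely generated over $S$. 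This shows $S$ is left $n$-coherent. For the reverse implication: suppose $S$ is left $n$-coherent and let $M$ be a left $R$-module with a finite $n$-presentation over $R$. Restricting scalars along $S \hookrightarrow R$ and using that each $R^{t_i}$ is free of rank $e t_i$ over $S$, we obtain a finite $n$-presentation of $M$ regarded as a left $S$-module, with ranks $e t_i$ and kernel the restriction of $\ker(\theta_n)$. Left $n$-coherence of $S$ gives that $\ker(\theta_n)$ is finitely generated over $S$, and a fortiori over the larger ring $R$. Hence $R$ is left $n$-coherent.

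The main obstacle, and the step requiring the most care, is the bimodule freeness statement $R \cong S^{\oplus e}$ on each side: while morally clear (it is the profinite analogue of the elementary fact that $\ZZ[G] = \bigoplus \ZZ[U] g_i$ for finite $G$), one must justify that the coset-representative decomposition survives passage to the inverse limit, i.e. that $\varprojlim_n \ZZ[G/N_n] = \bigoplus_{i=1}^e \bigl(\varprojlim_n \ZZ[U/N_n]\bigr) g_i$, which follows because for each $n$ (with $N_n \subseteq U$) one has the finite-level decomposition $\ZZ[G/N_n] = \bigoplus_i \ZZ[U/N_n]\bar g_i$ compatibly with the transition maps, and finite direct sums commute with inverse limits. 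One should also note that the projection $R \to S$ onto the trivial-coset component is a homomorphism of $(S,S)$-bimodules (being induced at finite level by the $\ZZ[U/N_n]$-bimodule projections $\ZZ[G/N_n] \to \ZZ[U/N_n]$), which is what makes the "direct summand" descent in the forward implication work. Everything else — the exactness of $R \otimes_S (-)$, the preservation of finite $n$-presentations under an exact finite-rank-free base change, and the triviality of "finitely generated over $S$ $\Rightarrow$ finitely generated over $R$" — is formal.
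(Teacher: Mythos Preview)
Your proof is correct and follows essentially the same approach as the paper: both rest on the fact that, since $[G:U]$ is finite, $R$ is free of finite rank over $S=\ZZ[[U]]$, so that $R\otimes_S-$ is exact and an $S$-module $N$ is finitely generated if and only if $R\otimes_S N$ is finitely generated over $R$. You simply spell out more detail than the paper does --- in particular the $(S,S)$-bimodule retraction $R\to S$ for the forward implication, and the use of restriction of scalars (rather than induction) for the reverse implication --- whereas the paper compresses everything into one sentence.
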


\begin{proof} Since the index of $U$ in $G$ is finite, the functor $\ZZ[[G]]\otimes_{\ZZ[[U]]}-$ is flat and a left $\ZZ[[U]]$-module $N$ is finitely generated if and only if the left $\ZZ[[G]]$-module 
$\ZZ[[G]]\otimes_{\ZZ[[U]]}N$ is finitely generated. The stated result is a direct consequence of these facts. 
\end{proof}

Following this result (and the observations made in the proof of Corollary \ref{compact p-adic}(i)), to prove Theorem \ref{strong coherent thm} it is enough for us to show the following: if $G$ is both pro-$p$ and has no element of order $p$, and if $n=d+3$ in  
(\ref{fixed resolution20}), then the $R$-module $K := \ker(\theta_{d+3})$ is finitely generated. Our verification of this fact will depend crucially on the properties of $K$ that are established in claim (ii) of the next result. 

%
%
%
%
 
\begin{proposition}\label{tech lemma} Assume that $G$ is pro-$p$ and has no element of order $p$. 

\begin{itemize}
\item[(i)] The $R$-module $\im(\theta_{d+3})$ is pro-discrete. 
\item[(ii)] The $R$-module $K$ is equal to ${\varprojlim}_m \varrho_m(K)$ and is pro-discrete.
\end{itemize}
\end{proposition}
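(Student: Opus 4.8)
The plan is to reduce both claims to the behaviour of inverse limits of the layer-wise reductions $\theta_{j,(m)}:=R_m\otimes_R\theta_j$ of the maps $\theta_j$, and then to feed in the $p$-divisibility statement of Proposition~\ref{upd lemma0}. For a submodule $N\subseteq R^{t_j}$ write $\varrho_m(N)\subseteq R_m^{t_j}$ for the image of $N$ under the natural projection $R^{t_j}\to R_m^{t_j}=(R^{t_j})_{(m)}$. The whole argument rests on two elementary observations. Since each free module $R^{t_j}$ is pro-discrete, the map $\theta_j$ is the inverse limit of the $\theta_{j,(m)}$, so left-exactness of $\varprojlim_m$ gives $\ker(\theta_j)=\varprojlim_m\ker(\theta_{j,(m)})$; since also $\varrho_m(\ker\theta_j)\subseteq\ker(\theta_{j,(m)})$, the chain $\ker(\theta_j)\subseteq\varprojlim_m\varrho_m(\ker\theta_j)\subseteq\varprojlim_m\ker(\theta_{j,(m)})=\ker(\theta_j)$ forces $\varprojlim_m\varrho_m(\ker\theta_j)=\ker(\theta_j)$; with $j=d+3$ this is the first assertion of (ii), that $K=\varprojlim_m\varrho_m(K)$. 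Moreover the transition maps of each system $\{\varrho_m(N)\}_m$ are surjective, so ${\varprojlim}^1_m\varrho_m(N)=0$.

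For claim (i), $\im(\theta_{d+3})=\ker(\theta_{d+2})$ sits in an exact sequence $0\to K\to R^{t_{d+3}}\to\im(\theta_{d+3})\to 0$, so applying $R_m\otimes_R-$ identifies $(\im\theta_{d+3})_{(m)}$ with $R_m^{t_{d+3}}/\varrho_m(K)$. Passing to $\varprojlim_m$ in the short exact sequences $0\to\varrho_m(K)\to R_m^{t_{d+3}}\to(\im\theta_{d+3})_{(m)}\to 0$ (the first two systems having surjective transition maps, hence no ${\varprojlim}^1$-obstruction) and using $\varprojlim_m\varrho_m(K)=K$ and $\varprojlim_mR_m^{t_{d+3}}=R^{t_{d+3}}$, one gets $\varprojlim_m(\im\theta_{d+3})_{(m)}=R^{t_{d+3}}/K=\im(\theta_{d+3})$, the isomorphism being the canonical map. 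Hence $\im(\theta_{d+3})$ is pro-discrete.

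For the pro-discreteness of $K$, the natural surjection $K_{(m)}=R_m\otimes_RK\twoheadrightarrow\varrho_m(K)$ has kernel $\Tor_1^R(R_m,\im\theta_{d+3})$; passing to $\varprojlim_m$ here — using ${\varprojlim}^1_m\varrho_m(K)=0$, $\varprojlim_m\varrho_m(K)=K$, and the canonical section $K\to\varprojlim_mK_{(m)}$ — shows that $K$ is pro-discrete if and only if $\varprojlim_m\Tor_1^R(R_m,\im\theta_{d+3})=0$. To handle this I would apply Proposition~\ref{upd lemma0} to the first syzygy $\im(\theta_1)=\ker(\theta_0)$ with $n=d+2$: hypothesis (i) holds since $\im(\theta_1)\subseteq R^{t_0}$ has no $p$-torsion, hypothesis (ii) holds by truncating the given $(d+3)$-presentation of $M$, and hypothesis (iii) is automatic because, by Brumer~\cite{brumer} and Serre~\cite{serre}, $\ZZ_p[[G]]$ has global dimension $d+1<d+2$. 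This gives that $\Tor_a^R(R_m,\im(\theta_1))$ is $p$-divisible for every $a\ge d+2$, and since $\im(\theta_{d+3})$ is the $(d+2)$-nd syzygy of $\im(\theta_1)$, the dimension-shift isomorphism $\Tor_1^R(R_m,\im\theta_{d+3})\cong\Tor_{d+3}^R(R_m,\im(\theta_1))$ shows each $\Tor_1^R(R_m,\im\theta_{d+3})$ is $p$-divisible.

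The last step — deducing $\varprojlim_m\Tor_1^R(R_m,\im\theta_{d+3})=0$ from this $p$-divisibility — is the main obstacle, since $p$-divisibility alone only forces the $p$-adic completion of each individual term to vanish. The additional input I would bring in is finiteness coming from Noetherianity: $R/p^k=(\ZZ/p^k)[[G]]=\ZZ_p[[G]]/p^k$ is Noetherian, so $K/p^k$ is finitely generated over $R$, and one should be able to parlay this — together with the $p$-divisibility — into the statement that each $\Tor_1^R(R_m,\im\theta_{d+3})$ is finite of order prime to $p$; the inverse limit of the resulting system of finite groups must then be shown to vanish, and the natural route is to analyse the transition maps along the tower $R\to R_m\to R_{m-1}$ (each step having a finite $p$-group kernel, so that Nakayama's lemma over $\ZZ_p[N_{m-1}/N_m]$ is available, exactly as in the proof of Proposition~\ref{nakayama-lemma}) and show they are eventually zero.
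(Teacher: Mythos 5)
Your treatment of claim (i) and of the equality $K=\varprojlim_m\varrho_m(K)$ is correct (and essentially the same as the paper's, which runs the identical limit/Snake Lemma computation on the sequences $0\to\varrho_m(K)\to R_m^{t_{d+3}}\to\im(\theta_{d+3})_{(m)}\to 0$), and your reduction of the pro-discreteness of $K$ to the vanishing of $\varprojlim_m\Tor_1^{R}(R_m,\im\theta_{d+3})$, together with the $p$-divisibility of each group $\Tor_1^{R}(R_m,\im\theta_{d+3})\cong\Tor_{d+3}^{R}(R_m,\im\theta_1)$ via Proposition \ref{upd lemma0} and Brumer--Serre, also matches the paper's ingredients. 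The problem is the final step, which you yourself flag: nothing in your argument, or in Noetherianity of $R/p^k$, supports the claim that each $\Tor_1^{R}(R_m,\im\theta_{d+3})$ is \emph{finite} of prime-to-$p$ order. These Tor-groups are $R_m$-modules that in general have no reason to be finite; the groups that arise in exactly this kind of computation over $\ZZ[[G]]$ are typically uncountable uniquely $p$-divisible groups (compare the modules built from $\ZZ_p/\ZZ$, such as $R^p/(R^p\varpi+R)$, appearing in the proofs of Proposition \ref{ses} and Theorem \ref{general result}). And even if each term were uniquely $p$-divisible, the inverse limit of such groups need not vanish (a constant system with value $\QQ$ already shows this), so the vanishing of $\varprojlim_m\Tor_1^{R}(R_m,\im\theta_{d+3})$ cannot be extracted from divisibility of the terms alone; it requires control of the transition maps, which you do not supply. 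So there is a genuine gap precisely at the point where the whole difficulty of claim (ii) sits.

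The paper closes this gap by a different device that avoids ever computing the inverse limit of Tor-groups: it suffices (as in property (P2) in the proof of Proposition \ref{nakayama-lemma}) to show that each comparison map $\kappa_a\colon R_a\otimes_{R_{a+1}}\varrho_{a+1}(K)\to\varrho_a(K)$ is bijective, and $\ker(\kappa_a)$ is then squeezed between two incompatible properties. On one hand, taking $\Delta$-coinvariants ($\Delta=N_a/N_{a+1}$, a finite $p$-group) of the exact sequences $0\to\varrho_{a+1}(K)\to R_{a+1}^{t_{d+3}}\to\im(\theta_{d+3})_{(a+1)}\to 0$ identifies $\ker(\kappa_a)$ with a quotient of $\Tor_1^{R_{a+1}}(R_a,\im(\theta_{d+3})_{(a+1)})\cong H_1(\Delta,\im(\theta_{d+3})_{(a+1)})$, so $\ker(\kappa_a)$ has exponent dividing a power of $p$. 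On the other hand, comparing with the surjections $K_{(a+1)}\to\varrho_{a+1}(K)$ shows $\ker(\kappa_a)$ is a quotient of $\Tor_1^{R}(R_a,\im\theta_{d+3})$, hence is $p$-divisible by exactly the divisibility statement you proved. A $p$-divisible group of $p$-power exponent is zero, so every $\kappa_a$ is bijective and $K=\varprojlim_m\varrho_m(K)$ is pro-discrete. The missing idea in your proposal is this use of the finite-$p$-group coinvariant computation to bound the exponent of the relevant obstruction, which is what converts layer-wise $p$-divisibility into actual vanishing; I would recommend replacing your final paragraph with this argument rather than pursuing the finiteness claim.
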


\begin{proof} For each natural number $a$, there exists a commutative diagram of $R$-modules 
\begin{equation}\label{firstdiagram}
\xymatrix{ 0 \ar[r] & K \ar[dl] \ar[dd]^{\varrho^t_{a+1}} \ar[r]^{\iota} & R^t \ar[dd]^{\varrho^t_{a+1}} \ar[r]^{\theta} & M'\ar[dd] \ar[r] &0\\
 K_{(a+1)}\ar[dr]^{\nu_{a+1}}\\
 0 \ar[r] & \varrho_{a+1}(K) \ar[r]^{\,\,\,\iota_{a+1}}\ar[d]^{\rho_{a+1}^t} & R_{a+1}^t\ar[d]^{\rho_{a+1}^{t}} \ar[r]^{\hskip-0.1truein \theta_{(a+1)}}& M'_{(a+1)}\ar[d] \ar[r] & 0,\\
0 \ar[r] & \varrho_{a}(K) \ar[r]^{\iota_a} & R_{a}^t \ar[r]^{\theta_{(a)}}& M'_{(a)} \ar[r] & 0.
}
\end{equation}
Here we set $\theta = \theta_{d+3}$, $K = \ker(\theta)$, $M' := \im(\theta) = \ker(\theta_{d+2})$ and $t = t_{d+3}$, and write  $\iota$ for the tautological inclusion $K \subseteq R^t$. We also write $\varrho_{a+1}^t$ and $\rho_{a+1}^t$ for the projections $R^t \to R_{a+1}^t$ and $R_{a+1}^t \to R_a^t$ (so that $\varrho_a^t = \rho_{a+1}^t\circ \varrho_{a+1}^t$), $\nu_{a+1}$ for  the canonical map $K_{(a+1)} \to \varrho_{a+1}(K)$ and $\iota_{a+1}$ for the inclusion $\varrho_{a+1}(K) \subseteq R_{a+1}^t$. In addition, all unlabelled arrows in the diagram are the natural projections. Then, as $\nu_{a+1}$ is surjective and $R_{a+1}^t = (R^t)_{(a+1)}$, the commutativity of the diagram implies that the second, and in a similar way third, row is exact. In particular, since 
$\rho_{a+1}^{t}$ is surjective the Mittag-Leffler criterion ensures that, by passing to the inverse limit over $a$ of these diagrams, we obtain an exact commutative diagram 
\begin{equation}\label{firstdiagram1}
\xymatrix{ 0 \ar[r] & K \ar[d]_{\iota'} \ar[r]^{\iota} & R^t \ar[d]^{\cong} \ar[r]^{\theta} & M'\ar[d]^{\mu} \ar[r] &0\\
  0 \ar[r] & {\varprojlim}_a\varrho_a(K) \ar[r]^{\subseteq} & {\varprojlim}_aR_{a}^t \ar[r]^{\hskip-0.1truein(\theta_{(a)})_a}& \varprojlim_a M'_{(a)} \ar[r] & 0,
}
\end{equation}
and hence, by applying the Snake Lemma to this diagram, an exact sequence of $R$-modules 
\[ 0 \to K \xrightarrow{\iota'} {\varprojlim}_a \varrho_a(K) \to M' \xrightarrow{\mu} \varprojlim_a M'_{(a)}\to 0.\]
To simultaneously prove claim (i) and the first assertion of claim (ii), it is thus enough to prove $\mu$ is injective. This is, however, a direct consequence of the commutative diagram 
\[ \begin{CD} M' @> \mu' >> R^{t_{d+1}}\\
 @V\mu VV @\vert \\
\varprojlim_a M'_{(a)} @> ( \mu'_{(a)})_a >> \varprojlim_a R^{t_{d+1}}_{a}\end{CD}\]
in which $\mu'$ denotes the natural inclusion. 

To prove the second assertion of claim (ii) it is then enough to show that the map 
\[ \kappa_{a}: R_{a}\otimes_{R_{a+1}}\varrho_{a+1}(K)\to 
\varrho_{a}(K)\]
that is induced by the surjection $\rho_{a+1}^{t}$ is injective (and hence bijective). For this argument we 
write $\Delta$ for the finite normal subgroup $N_a/N_{a+1}$ of $\Gamma_{a+1}$ and note that the functor 
$R_{a}\otimes_{R_{a+1}}-$ on left $R_{a+1}$-modules identifies with taking $\Delta$-coinvariants. In particular, the second and third rows of the exact diagram (\ref{firstdiagram}) give rise to an exact commutative diagram of $R_a$-modules 
\begin{equation*}\label{first diagram 2}
\minCDarrowwidth 1em\begin{CD}
{\Tor}_1^{R_{a+1}}(R_{a},M'_{(a+1)}) 
@> \tilde\kappa_{a} >> \bigl(\varrho_{a+1}(K)\bigr)_{\Delta} @> (\iota_{a+1})_{\Delta} >> \bigl(R_{a+1}^t\bigr)_{\Delta} @> >> \bigl(M'_{(a+1)}\bigr)_{\Delta} @> >> 0\\
& & @V \kappa_{a} VV @V\cong VV @V VV\\
0 @> >> \varrho_{a}(K) @> \iota_a >> R_{a}^t @> >> M'_{(a)} @> >> 0,\end{CD}
\end{equation*}
which implies $\ker(\kappa_{a})= \ker((\iota_{a+1})_{\Delta}) =  \im(\tilde\kappa_{a})$ is isomorphic to a quotient of the homology group 
 ${\Tor}^{R_{a+1}}_1(R_{a},M'_{(a+1)}) \cong H_1(\Delta,M'_{(a+1)})$. 
In particular, since the exponent of the latter group divides $|\Delta|$ (which is a finite power of $p$), the same is true for the group $\ker(\kappa_{a})$. 

On the other hand, the first row of (\ref{firstdiagram}) induces an isomorphism of $\ker(\nu_{a+1})= \ker(\iota_{(a+1)})$ with ${\Tor}^R_1(R_{a+1},M')$ and hence gives rise to an exact commutative diagram 
\begin{equation*}
\begin{CD}
& & \bigl({\Tor}^R_1(R_{a+1},M')\bigr)_{\Delta} @> >> \bigl(K_{(a+1)}\bigr)_{\Delta} @> (\nu_{a+1})_\Delta >>  \bigl(\varrho_{a+1}(K)\bigr)_{\Delta} @> >> 0\\
& & @V VV @V\cong VV @VV\kappa_{a}V\\
0 @> >> {\Tor}^R_1(R_{a},M') @> >> K_{(a)} @> \nu_a >>  \varrho_{a}(K) @> >> 0.\end{CD}
\end{equation*}
This diagram implies $\ker(\kappa_{a})$ is isomorphic to a quotient of $ {\Tor}^R_1(R_{a},M')$. Hence, since the exponent of 
$\ker(\kappa_a)$ divides $|\Delta|$, to prove $\kappa_a$ is injective it is enough to show $ {\Tor}^R_1(R_{a},M')$, and hence also $\ker(\kappa_a)$, is $p$-divisible. To prove this we first note that the exact sequence (\ref{fixed resolution20}) (with $n=d+3$) induces an isomorphism between ${\Tor}^R_1(R_{a},M') = {\Tor}^R_1(R_{a},\im(\theta_{d+3}))$ and ${\Tor}^R_{d+3}(R_{a},\im(\theta_1))$.

The key point now is that, since $G$ has no element of order $p$, its $p$-cohomological dimension is finite and equal to $d$ (by Serre \cite[Cor. (1)]{serre}). In particular, by applying a result of Brumer \cite[Th. 4.1 with $\Omega = \ZZ_p$]{brumer} in this case, it follows that ${\rm pd}_\Lambda\bigl(\im(\theta_1)_{\langle p\rangle}\bigr)\le d+1$. 

In addition, the sequence (\ref{fixed resolution20}) (with $n=d+3$) implies that the $R$-module $\im(\theta_1)$ is a finitely $(d+2)$-presented. Hence, since neither $\im(\theta_1) \subseteq R^{t_0}$ nor $R_a$ has an element of order 
$p$, we may apply Proposition \ref{upd lemma0} with $M=\im(\theta_1), L = R_a, n = d+2$ and $a= d+3$ in order to deduce that ${\Tor}^R_{d+3}(R_{a},\im(\theta_1))$, and hence also $\ker(\kappa_a)$, is $p$-divisible, as required. 
%
%
%
\end{proof}

In view of Proposition \ref{tech lemma}(ii), we can now apply Proposition \ref{nakayama-lemma}(ii) to the $R$-module $K$ to deduce it is finitely generated  provided that, as $m$ varies, the  quantities $\mu_{R_m}\bigl(\varrho_m(K)\bigr)$ are bounded independently of $m$. By applying the Forster-Swan Theorem (cf. \cite[Th. 41.21]{curtis-reiner}) to each order $R_m$,  the latter condition is then reduced to showing the existence of a natural number $c$ such that, for every $m$ and every prime ideal $\fp$ of $\ZZ$, one has $\mu_{R_{m,\fp}}\bigl( \varrho_m(K)_\fp\bigr) \le c$.

If, firstly,  $\fp \not= p\ZZ$, then $R_{m,\fp} = \ZZ_\fp[\Gamma_m]$ is a maximal, and hence hereditary, $\ZZ_\fp$-order (as $\Gamma_m$ is a finite $p$-group). It follows that  $\varrho_m(K)_\fp$ is a projective $R_{m,\fp}$-submodule of $R_{m,\fp}^{t_{d+3}}$ and hence that $\mu_{R_{m,\fp}}\bigl(\varrho_m(K)_\fp\bigr) \le t_{d+3}$ (see, for example, \cite[Prop. 3.3]{ag}, and note that $R_{m,\fp}$ is a primary $\ZZ_\fp$-algebra, as observed at the beginning of \S3 of loc. cit.). 

If, on the other hand, $\fp = p\ZZ$, then the kernel of the projection $R_{m,\fp} \to R_{0,\fp} = \ZZ_p$ belongs to the Jacobson radical of $R_{m,\fp}$ and so Nakayama's Lemma combines with the isomorphism 
$\,\ZZ_p\otimes_{R_{m},\fp}\varrho_m(K) \cong \varrho_0(K)_{\fp}\,$ induced by the (the argument of) Proposition \ref{tech lemma}(ii) to imply 
that $\mu_{R_{m,\fp}}\bigl( \varrho_m(K)_\fp\bigr) = \mu_{\ZZ_p}(\ZZ_p\otimes_\ZZ\varrho_0(K))$.

Thus, if one takes $c$ to be the maximum of $t_{d+3}$ and $\mu_{\ZZ_p}(\ZZ_p\otimes_\ZZ\varrho_0(K))$, then, for all prime ideals $\fp$ of $\ZZ$, one has $\mu_{R_{m,\fp}}\bigl( \varrho_m(K)_\fp\bigr) \le c$. This completes the proof of Theorem \ref{strong coherent thm}.

\begin{remark}\label{last rem} In this remark, we continue to assume $G$ is a compact $p$-adic analytic group of rank $d$, and consider the possibility of strengthening Theorem \ref{strong coherent thm}.   \

\noindent{}(i) In order to prove, by the same method, $R$ is $(d+2)$-coherent, it would be enough to show, if $n=d+2$ in (\ref{fixed resolution20}), then ${\rm pd}_{\Lambda}\bigl(\im(\theta_1)_{\langle p\rangle}\bigr)< d+1$. This condition is satisfied if ${\bigcup}_{n \in \mathbb{N}}M[p^n]$ has no non-zero $p$-divisible subgroup (as is the case if $M$ is pro-discrete) since then the induced map 
$\im(\theta_1)_{\langle p\rangle} \to R_{\langle p\rangle}^{t_0}$ is injective and so  \cite[Th. 4.1]{brumer} implies 
${\rm pd}_\Lambda\bigl(\im(\theta_1)_{\langle p\rangle}\bigr) \le d$. In general, however, establishing injectivity of all of the possible maps $\im(\theta_1)_{\langle p\rangle} \to R_{\langle p\rangle}^{t_0}$ is, in effect, equivalent to showing $R$ satisfies a variant of the Artin-Rees property relative to the ideal $pR$ and seems difficult. 

\noindent{}(ii) If $d\le 2$, then an alternative approach (that does not rely on \cite{brumer} and \cite{serre}) can be used to improve Theorem \ref{strong coherent thm}. Specifically, if either $d=1$, or $d=2$ and $G$ contains a pro-$p$ meta-procyclic subgroup (in the sense of \cite[Chap. 3, Ex. 10]{ddms}), then a special case of the exact sequence in Proposition \ref{ses} can be used to show directly that, if $n=2d$ in (\ref{fixed resolution20}), then ${\Tor}^R_{2d}(R_{a},\im(\theta_{1}))$ is $p$-divisible, and so (by the above argument) $R$ is $2d$-coherent. This approach underlies the proof of \cite[Th. 1.1]{iit} (for $G = \ZZ_p$), but does not apply in all cases since pro-$p$ compact $p$-adic analytic groups need not have any infinite procyclic normal subgroups (for example, if $m \ge 3$, then results \cite[Th. 1 and Th. 3(ii)]{klingenberg} of Klingenberg imply all infinite normal subgroups of ${\rm SL}_m(\ZZ_p)$ are open). In fact, by closely analysing the finite-presentability of pro-discrete modules, it is also shown in \cite[Th. 1.1 and Prop. 5.2]{iit} that $\ZZ[[\ZZ_p]]$ has weak Krull dimension $2$ (in the sense of Tang \cite{tang}) and, for `most' $p$, is a  $(2,2)$-domain that is neither a $(1,2)$-domain or a $(2,1)$-domain (in the sense of Costa \cite{costa}). However, we do not know the extent, if any, to which such finer structure results generalise. \end{remark}

\end{document}